\keywords{constructive analysis, Lipschitz analysis, McShane-Whitney extension}
\theoremstyle{plain}\newtheorem{theorem}[thm]{Theorem}
\theoremstyle{plain}\newtheorem{proposition}[thm]{Proposition}
\theoremstyle{plain}\newtheorem{corollary}[thm]{Corollary}
\theoremstyle{plain}\newtheorem{lemma}[thm]{Lemma}
\theoremstyle{definition}\newtheorem{definition}[thm]{Definition}
\theoremstyle{plain}\newtheorem{remark}[thm]{Remark}
\newcommand{\Nat}{{\mathbb N}}
\newcommand{\Real}{{\mathbb R}}
\newcommand{\Iii}{{\mathbb I}}
\newcommand{\Fii}{{\mathbb F}}
\newcommand{\id}{\mathrm{id}}
\newcommand{\Lip}{\mathrm{Lip}}
\newcommand{\LXY}{\Lip(X, Y)}
\newcommand{\LX}{\Lip(X)}
\newcommand{\CN}{\mathrm{C_{u}(\Nat)}}
\newcommand{\LN}{\Lip(\Nat)}
\newcommand{\lub}{\mathrm{lub}}
\newcommand{\glb}{\mathrm{glb}}
\newcommand{\Hoel}{\mathrm{{H{\oo}l}}}
\newcommand{\oo}{\mathrm{\text{\"{o}}}}
\newcommand{\Const}{\mathrm{Const}}
\newcommand{\pLip}{\mathrm{p{\text -}Lip}}
\newcommand{\RCA}{\mathrm{RCA}}
\newcommand{\TOT}{\Leftrightarrow}
\newcommand{\To}{\Rightarrow}
\newcommand{\CST}{\mathrm{CST}}
\newcommand{\CZF}{\mathrm{CZF}}
\newcommand{\BISH}{\mathrm{BISH}}
\newcommand{\PEM}{\mathrm{PEM}}
\newcommand{\WPEM}{\mathrm{WPEM}}
\theoremstyle{plain} 
\begin{document}

\title[McShane-Whitney extensions in constructive analysis]{McShane-Whitney extensions in constructive analysis}
\titlecomment{{\lsuper*}This paper is a major extension of~\cite{Pe17}.}

\author[I.~Petrakis]{Iosif Petrakis}	
\address{Mathematics Institute, Ludwig-Maximilians-Universit\"{a}t M\"{u}nchen}	
\email{petrakis@math.lmu.de}  

%





\begin{abstract}
\noindent Within Bishop-style constructive mathematics we study the classical McShane-Whitney theorem
on the extendability of real-valued Lipschitz functions defined on a subset of a metric space. Using a formulation similar to 
the formulation of McShane-Whitney theorem, we show 
that the Lipschitz real-valued functions 
on a totally bounded space are uniformly dense in the set of uniformly continuous functions. Through the
introduced notion of a McShane-Whitney pair we describe the constructive content of the original McShane-Whitney
extension and we examine how the properties of a Lipschitz function defined on the subspace of the pair extend to
its McShane-Whitney extensions on the space of the pair. Similar McShane-Whitney pairs and extensions are established for
H\"{older} functions and $\nu$-continuous functions, where $\nu$ is a modulus of continuity.
A Lipschitz version of a fundamental
corollary of the Hahn-Banach theorem, and the approximate McShane-Whitney theorem are shown.  

\end{abstract}

\maketitle

\section*{Introduction}
\label{S:one}

A central theme in mathematical  analysis, classical or constructive, is the extension of a continuous function $f$, 
defined on a subspace $A$ of a space $X$ and satisfying some property $P$, to a continuous function $f^*$ defined 
on $X$ and satisfying the property $P$. A ``general extension theorem'' is a statement providing conditions on $X$ and 
$A$ such that such an extension of $f$ to $f^*$ is possible\footnote{Note that the general pattern of an extension theorem
is not to bestow extra properties upon $f$.}. As it is noted in~\cite{Du66}, p.~149, ``general extension theorems are 
rare and usually have interesting topological consequences''.

If $X$ is a normal (and Hausdorff) topological space, $A$ is a closed subspace of $X$, the continuous function on $A$
takes values in $\Real$, and $P(f)$ is ``$f$ is bounded'', the corresponding general extension theorem is Tietze's 
extension theorem, which is actually a characterisation of normality (see~\cite{Du66}, pp.~149-151) and its proof 
rests on Urysohn's lemma.
If $X$ is a metric space, hence normal, we get the restriction of the classical Tietze extension theorem to metric
spaces. If $X$ is a complete and separable metric space, Urysohn's lemma and Tietze's extension theorem are provable
in $\RCA_0$ (see~\cite{Si09}, pp.~90-91). Within Bishop-style constructivism, which is the framework of constructive
mathematics we work within, a Tietze extension theorem for metric spaces is proved in~\cite{BB85}, p.~120. According
to it,  if $X$ is a metric space, $A$ is a locally compact\footnote{In Bishop-style constructive analysis a locally 
compact metric space $X$ is a metric space such that every bounded subset of $X$ is included in a compact subset of $X$.} 
subset of $X$, and $f : A \to [a, b]$ is continuous i.e., it is uniformly continuous on every bounded subset 
of $A$ (see~\cite{BB85}, p.~110), there is an extension function $f^* : X \to [a, b]$ of $f$ on $X$, which is also uniformly 
continuous on every bounded subset of $X$. As it is customary in the constructive treatment of a classical extension theorem, 
the constructive theorem is less general, and sometimes the notions involved have to be understood differently. 
The benefit of the constructive approach though, is that the, usually hidden,  computational content of the classical 
theorem is revealed.  

If $X$ is a normed space, $A$ is a linear subspace of $X$ and the property $P(f)$ for the continuous function 
$f : A \to \Real$ is ``$f$ is linear and $||f|| = a$'', where $a \in [0, + \infty)$, the corresponding extension theorem
is (one version of) the Hahn-Banach theorem. Its proof is non-constructive, as it rests on the axiom of choice. 
Constructively, one needs to add some extra properties on $X$, in order to avoid the axiom of choice. E.g., in~\cite{Is89}
the normed space $X$ is also uniformly convex, complete and its norm is G\^ateaux differentiable. Since constructively 
the existence of the norm of the continuous functional is equivalent to the locatedness of its kernel 
(see~\cite{BV06}, pp.~53-54), in the formulation of the constructive Hahn-Banach threorem we need to suppose that
the norm $||f||$ of $f$ exists. Then the norm $||f^*||$ of the extension function $f^*$ exists too, and $||f^*|| = ||f||$. 
The computational content of the classical theorem is revealed in the constructively provable, \textit{approximate version}
of the Hahn-Banach theorem (see~\cite{BR87}, pp.~39-40). According to it, 
if $X$ is a separable normed space and $||f||$ exists, then for every $\epsilon > 0$ there is a continuous and linear
extension $f^*$ of $f$ with a norm $||f^*||$ satisfying 
$$||f^*|| \leq ||f|| + \epsilon.$$
Hence, in the approximate version of the Hahn-Banach theorem the numerical part of $P(f)$ is satisfied only approximately.

If $X$ is a metric space, $A$ is any subspace of $X$ and the property $P(f)$ for the (necessarily continuous) 
function $f : A \to \Real$ is ``$f$ is Lipschitz'', the corresponding extension theorem is the McShane-Whitney theorem, 
which appeared first\footnote{To determine metric spaces $X$ and $Y$, such that a similar extension 
theorem for $Y$-valued Lipschitz functions defined on a subset $A$ of $X$ holds, is a non-trivial problem under
active current study (see~\cite{BL00},~\cite{BB12} and~\cite{Tu14}).} in~\cite{Mc34} and~\cite{Wh34}. This 
theorem admits a classical proof that uses the axiom of choice, and it is very similar to the proof of the analytic
version of the Hahn-Banach theorem (see~\cite{Co12}, pp.~224-5). 

Following~\cite{We99}, pp.16-17, if $\sigma > 0$ and $f$ is $\sigma$-Lipschitz i.e.,
$\forall_{a, a' \in A}(|f(a) - f(a')| \leq \sigma d(a, a'))$,  
and if $x \in X \setminus A$, a $\sigma$-Lipschitz extension $g$ of $f$ on $A \cup \{x\}$ must satisfy
\[\forall_{a \in A}\big(|g(x) - g(a)| = |g(x) - f(a)| \leq \sigma d(x, a)\big),\]
which is equivalent to 
\[\forall_{a \in A}\big(f(a) - \sigma d(x, a) \leq g(x) \leq f(a) + \sigma d(x, a)\big).\]
Since $\forall_{a, a' \in A}\big(|f(a) - f(a')| \leq \sigma d(a, a') \leq \sigma (d(a, x) + d(x, a'))\big)$, 
we get
\[\forall_{a, a' \in A}\big(f(a) - \sigma  d(a, x) \leq f(a') + \sigma d(x, a')\big).\]
Hence, if we fix some $a \in A$ (the case $A = \emptyset$ is trivial), then for all $a' \in A$ we get
\[f(a) - \sigma  d(a, x) \leq f(a') + \sigma d(x, a'),\]
therefore, using the classical completeness property of real numbers, we get
\[f(a) - \sigma  d(x, a) \leq \inf \{f(a') + \sigma d(x, a') \mid a' \in A\}.\]
Since $a$ is an arbitrary element of $A$, we conclude that
\[s_x \equiv \sup \{f(a) - \sigma  d(x, a) \mid a \in A\} \leq \inf\{f(a) + \sigma d(x, a) \mid a \in A\} 
\equiv i_x.\]
If we define $g(x)$ to be any value in the interval $[s_x, i_x]$, we get the required extension of $f$ on 
$A \cup \{x\}$.
If we apply Zorn's lemma on the non-empty poset
\[\{(B, h) \mid B \supseteq A \ \& \ h : B \to \Real \ \mbox{is} \ \sigma \mbox{-Lipschitz} \ \& \  h_{|A} = f\},\]
where
\[(B_1, h_1) \leq (B_2, h_2) : \TOT B_1 \subseteq B_2 \ \& \ {h_2}_{|B_1} = h_1,\]
its maximal element is the required $\sigma$-Lipschitz extension of $f$. 

What McShane and Whitney independently observed though, was that if one defines $g^*(x) = s_x$ and
 $^*g(x) = i_x$, for all $x \in X$, the resulting functions $g^*$ and $^*g$ on $X$ are $\sigma$-Lipschitz 
 extensions of $f$, and, as expected by the above possible choice of $g(x)$ in $[s_x, i_x]$, every other 
 $\sigma$-Lipschitz extension of $f$ is between $g^*$ and $^*g$. In this way, the McShane-Whitney explicit
 construction of the 
extensions $g^*$ and $^*g$ of $f$ avoids Zorn's lemma.

If $X$ is a metric space and $A$ is a subset of $X$, we say that $(X, A)$ is a McShane-Whitney pair, if the
functions $g^*$ and $^*g$ are well-defined (Definition~\ref{def: MW}). Classically, all pairs $(X, A)$, where $A \neq 
\emptyset$, are McShane-Whitney
pairs. The definitions of $g^*$ and $^*g$ though, depend on the existence of the supremum and infimum of non-empty, 
bounded above and bounded below subsets of reals, respectively. Since the classical completeness property of reals 
implies constructively the principle of the excluded middle $(\PEM)$ (see~\cite{BV06}, p.~32), it is not a surprise 
that with a similar argument we can show that the statement ``every pair $(X, A)$ is a McShane-Whitney pair'' implies 
constructively the principle $\PEM$ (Remark~\ref{rem: pem2}).


In this paper we study the McShane-Whitney extension within Bishop's informal system of constructive
mathematics $\BISH$ (see~\cite{Bi67}, \cite{BB85},~\cite{BR87} and~\cite{BV06}). A formal system for $\BISH$ is 
Myhill's system $\CST$ with dependent choice (see~\cite{My75}), or Aczel's system of constructive set theory $\CZF$ 
with dependent choice (see~\cite{AR10}).

The main results included here reveal, in our view, the deep interactions between the McShane-Whitney theorem and 
the Hahn-Banach theorem. We can summarise our results as follows:

\begin{itemize}
\item If $X$ is a totally bounded metric space, we prove the uniform density of the Lipschitz functions 
on $X$ in the set of uniformly continuous functions on $X$ (Corollary 1.1). For that we prove 
Theorem~\ref{thm: density1} that has a formulation similar to the formulation of the McShane-Whitney 
theorem (Theorem~\ref{thm: MW}).
 \item We introduce the notion of a McShane-Whitney pair that reveals the constructive content of the classical
 McShane-Whitney extension in Theorem~\ref{thm: MW}. 
 \item We describe how properties of $f$, like boundedness, existence of the Lipschitz constant $L(f)$ of $f$,
 linearity, and convexity,  extend to $f^*$ and $^*f$ (subsections~\ref{s:metric} 
 and~\ref{s:normpairs}).
\item We provide conditions on $X$ and/or $Y$ so that $(X, A)$ is a McShane-Whitney pair, from the constructive
point of view (Proposition 2.1).
\item We define a local version of a McShane-Whitney pair (subsection~\ref{s:local}).
\item We prove a Lipschitz version of a fundamental corollary of the Hahn-Banach theorem 
(Theorem~\ref{thm: corhb2}).
\item We show how the McShane-Whitney pairs and the McShane-Whitney construction can be applied to 
H\"{o}lder functions and $\nu$-continuous functions, where $\nu$ is a modulus of continuity 
(subsection~\ref{s:hoelder}).
\item We prove the approximate McShane-Whitney theorem for Lipschitz functions defined on a finite-dimensional 
subspace of a normed space (Theorem~\ref{thm: aprox}). This is, in our view, the most technically interesting 
result of this paper.
 
\end{itemize}

First we prove some fundamental facts in the constructive theory of Lipschitz functions, which is quite
different from its classical counterpart.

\section{Lipschitz functions, constructively}\label{S:basic}

Constructive Lipschitz analysis is quite underdeveloped. Some related results are found scattered e.g.,
in~\cite{De69},~\cite{JP85},~\cite{Lo13},~\cite{BHP16}. For the sake of completeness, we include the definitions 
of some fundamental concepts.

\begin{definition}\label{def: real} Let $A \subseteq \Real$ and $b \in \Real$.
If $A$ is bounded above, we define 
$$b \geq A :\Leftrightarrow \forall_{a \in A}(b \geq a), $$
$$[A) := \{c \in \Real \mid c \geq A\},$$ 
$$l := \sup A :\Leftrightarrow l \geq A \ \& \ \forall_{\epsilon > 0}\exists_{a \in A}(a > l - \epsilon),$$ 
$$\lambda := \lub A :\Leftrightarrow \lambda \geq A  \ \& \ \forall_{b \in [A)}(b \geq \lambda).$$ 
If $A$ is bounded below, $b \leq A$, $(A]$,  $m := \inf A$, and $\mu := \glb A$ are defined in a dual way. 
\end{definition}

According to the classical completeness of $\Real$, if $A \subseteq \Real$ is inhabited and bounded above,
then $\sup A$ exists. Constructively, this statement implies the principle of the excluded middle $\PEM$ (i.e., 
the scheme $P \vee \neg P$) (see~\cite{BV06}, p.~32). A similar argument\footnote{This argument goes as follows. 
If $A := \{0\} \cup \{x \in \Real \mid x = 1 \ \& \ P\}$ and $\lambda := \lub A$ exists, then by the constructive 
version of the classical trichotomy of reals (Corollary (2.17) in~\cite{BB85}, p.~26) we have that $\lambda > 0$ 
or $\lambda < 1$. If $\lambda > 0$, we suppose $\neg P$, hence $A = \{0\}$, and $\lambda$ cannot be the least upper
bound of $\{0\}$. Hence we showed $\neg \neg P$. If $\lambda < 1$, we suppose $P$, hence $A = \{0, 1\}$ and we get 
the contradiction $1 \leq \lambda < 1$. Hence we showed $\neg P$.} shows that the corresponding existence of $\lub A$ 
implies the weak principle of the excluded middle $\WPEM$ (i.e., the scheme $\neg P \vee \neg \neg P$). It is immediate 
to show that if $\sup A$ exists, then $\lub A$ exists and they are equal, for every $A \subseteq \Real$. The converse
is not generally true (see~\cite{Ma83}, p.~27).

%
%
%

Note that Bishop used the terms $\lub A$ and $\sup A$ (and similarly the terms $\glb A$ and $\inf A$) without 
distinction. In~\cite{Ma83} Mandelkern gave reasons to keep both notions at work. In~\cite{Ma83}, pp.~24-25, 
he also gave a necessary and sufficient condition for the existence
of $\lub A$ and $\glb A$, similar to the constructive version of completeness given by Bishop and Bridges
in~\cite{BB85}, p.~37, and he proved the following remark: If $A \subseteq \Real$ is bounded and $\glb A$ exists,
then $\sup (A]$ exists and $\sup (A] = \glb A$, while if $\lub A$ exists, then $\inf [A)$ exists
and $\inf [A) = \lub A$.

\begin{definition}\label{def: ucont} If $X, Y$ are sets, we denote by $\Fii(X, Y)$ the set of functions of 
type $X \rightarrow Y$ and by $\Fii(X)$ the set of functions of type $X \rightarrow \Real$. If $a \in \Real$, 
then $\overline{a}_{X}$ 
denotes the constant map in $\Fii(X)$ with value $a$, and $\Const(X)$ is the set of constant maps on $X$. 
If $(X, d), (Y, \rho)$ are metric spaces, then $C_u(X, Y)$ denotes the set of uniformly continuous functions
from $X$ to $Y$, where $f \in C_u(X, Y)$, if there is a function $\omega_f : \Real^+ \to \Real^+$, a
\textit{modulus of uniform continuity} of $f$, such that for every $\epsilon > 0$
$$\forall_{x, y \in X}\big(d(x, y) \leq \omega_{f}(\epsilon) \Rightarrow \rho(f(x), f(y)) \leq \epsilon\big).$$
We denote by $C_u(X)$ the set $C_u(X, \Real)$, where $\Real$ is equipped with its standard metric. The metric
$d_{x_{0}}$ at the point $x_{0} \in X$ is defined by
$d_{x_{0}}(x) := d(x_{0}, x),$ for every $x \in X$, and $U_{0}(X) := \{d_{x_{0}} \mid x_{0} \in X\}.$ 
The set $X_{0}$ of the $d$-distinct pairs of $X$ is defined by 
$$X_{0} := \{(x, y) \in X \times X \mid d(x, y) > 0\}.$$

\end{definition}

The Lipschitz functions are defined constructively as in the classical setting. Throughout this section, $X$ is a
set equipped with a metric $d$, and $Y$ is a set equipped with a metric $\rho$.

\begin{definition}\label{def: Lip} 
The set of 
\textit{Lipschitz functions} from $X$ to $Y$ is defined by
$$\Lip(X, Y) := \bigcup _{\sigma \geq 0} \Lip(X, Y, \sigma),$$
$$\Lip(X, Y, \sigma) := \{f \in \mathbb{F}(X, Y) \mid \forall_{x, y \in X}(\rho(f(x), f(y)) \leq 
\sigma d(x, y))\}.$$
If $Y = \Real$, we write $\Lip(X)$ and $\Lip(X, \sigma)$, respectively.
\end{definition}

Clearly, $\Lip(X, Y) \subseteq C_u(X, Y)$. An element of $\LXY$ sends a bounded subset
of $X$ to a bounded subset of $Y$, which is not generally the case for elements of $C_u(X, Y)$; the
identity function $\id: \Nat \rightarrow \Real$, where $\Nat$ is equipped with the discrete metric, is 
in $\CN \setminus \LN$ and $\id(\Nat) = \Nat$ is unbounded in $\Real$. The following proposition is easy
to show.

\begin{proposition}\label{prp: Lip22} The set $\Lip(X)$
includes the sets $U_{0}(X)$, $\Const(X)$, and it is closed under addition and multiplication by reals.
If every element of $C_u(X)$ is a bounded function, then $\Lip(X)$ is 
closed under multiplication.
\end{proposition}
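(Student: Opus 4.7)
The plan is to verify each inclusion and closure assertion by a direct triangle-inequality computation, exhibiting an explicit Lipschitz constant in each case.

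For the inclusions, I would start by showing $U_{0}(X) \subseteq \Lip(X)$: for any $x_{0} \in X$, the reverse triangle inequality gives $|d_{x_{0}}(x) - d_{x_{0}}(y)| = |d(x_{0}, x) - d(x_{0}, y)| \leq d(x, y)$, so $d_{x_{0}} \in \Lip(X, 1)$. For $\Const(X) \subseteq \Lip(X)$, the constant maps trivially lie in $\Lip(X, 0)$.

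For closure under addition and scalar multiplication, I would check that if $f \in \Lip(X, \sigma)$ and $g \in \Lip(X, \tau)$, then $|(f+g)(x) - (f+g)(y)| \leq |f(x) - f(y)| + |g(x) - g(y)| \leq (\sigma + \tau) d(x,y)$, so $f + g \in \Lip(X, \sigma + \tau)$; and if $\lambda \in \Real$, then $|\lambda f(x) - \lambda f(y)| \leq |\lambda|\sigma\, d(x, y)$, so $\lambda f \in \Lip(X, |\lambda|\sigma)$.

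For the last assertion, suppose every element of $C_{u}(X)$ is bounded. Since $\Lip(X) \subseteq C_{u}(X)$, any $f \in \Lip(X, \sigma)$ and $g \in \Lip(X, \tau)$ admit bounds $M_{f}, M_{g} \geq 0$ with $|f(x)| \leq M_{f}$ and $|g(x)| \leq M_{g}$ for all $x \in X$. The standard identity
\[
f(x)g(x) - f(y)g(y) = f(x)\bigl(g(x) - g(y)\bigr) + g(y)\bigl(f(x) - f(y)\bigr)
\]
together with the triangle inequality yields $|f(x)g(x) - f(y)g(y)| \leq (M_{f}\tau + M_{g}\sigma)\, d(x, y)$, so $fg \in \Lip(X, M_{f}\tau + M_{g}\sigma)$. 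No subtle obstacle arises here constructively: all required bounds are given explicitly, and no use is made of $\sup$, $\inf$, or case analysis on reals. The only ingredient whose constructive status one might question is the existence of the bounds $M_{f}, M_{g}$, but this is precisely what the hypothesis provides.
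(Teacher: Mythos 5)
Your proof is correct, and since the paper omits the proof of this proposition entirely (declaring it ``easy to show''), your direct verification via the reverse triangle inequality, the explicit constants $\sigma+\tau$ and $|\lambda|\sigma$, and the product decomposition $f(x)g(x)-f(y)g(y)=f(x)(g(x)-g(y))+g(y)(f(x)-f(y))$ is exactly the intended argument. You also correctly identify that the only constructively sensitive point is the existence of the bounds $M_f, M_g$, which the stated hypothesis supplies.
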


Next we show\footnote{This proof appeared first in~\cite{Pe16}.} constructively the uniform density of
$\Lip(X)$ in $C_u(X)$, in case $X$ is \textit{totally bounded}
i.e., if there is a finite $\epsilon$-approximation to $X$, for every $\epsilon > 0$ (see~\cite{BB85}, p.~94).
The formulation and the proof of the following theorem are analogous to the formulation and the proof of Theorem~\ref{thm: MW}.

%

\begin{theorem}\label{thm: density1} Let $(X, d)$ be totally bounded.
If $f \in C_{u}(X)$ and $\epsilon > 0$, there are $\sigma > 0$, $g^{*} \in \Lip(X, \sigma)$ and ${^{*}}g
\in \Lip(X, \sigma)$ such that the following hold:\\[1mm]
$(i) $ $f - \epsilon \leq g^{*} \leq f \leq {^{*}}g \leq f + \epsilon$.\\[1mm]
$(ii) $ For every $e \in \Lip(X, \sigma)$, $e \leq f \Rightarrow e \leq g^{*}$.\\[1mm]
$(iii) $ For every $e \in \Lip(X, \sigma)$, $f \leq e \Rightarrow {^{*}}g \leq e$. \\[1mm]
$(iv)$ The pair $(g^{*}$, ${^{*}}g)$ is the unique pair of functions in $\Lip(X, \sigma)$
satisfying conditions $(i)${-}$(iii)$. 
\end{theorem}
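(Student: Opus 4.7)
The plan is to adapt the classical McShane-Whitney envelope construction to this setting, taking
$$g^{*}(x) := \inf\{f(y) + \sigma d(x,y) \mid y \in X\}, \qquad {^{*}}g(x) := \sup\{f(y) - \sigma d(x,y) \mid y \in X\},$$
for a $\sigma$ chosen large enough that the ``Lipschitz deficit'' of $f$ can be absorbed in the $\epsilon$-slack of~(i). Since $X$ is totally bounded and $f \in \CX$, the range $f(X)$ is a totally bounded subset of $\Real$, so I would fix $M > 0$ with $|f(x)| \leq M$ for all $x \in X$; let $\omega_{f}$ be a modulus of uniform continuity of $f$, and set $\sigma := 4M/\omega_{f}(\epsilon)$, where the factor $4$ leaves room for a constructive case split.

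Next I would check that $g^{*}$ and ${^{*}}g$ are well-defined elements of $\Lip(X, \sigma)$. For fixed $x$ the map $y \mapsto f(y) \pm \sigma d(x,y)$ is uniformly continuous on the totally bounded space $X$, so its image is totally bounded in $\Real$, and hence $\inf$ and $\sup$ exist in the sense of Definition~\ref{def: real}. For fixed $y$ the map $x \mapsto f(y) \pm \sigma d(x,y)$ is $\sigma$-Lipschitz, a property that is preserved under pointwise infima and suprema. For~(i), taking $y = x$ in the infimum yields $g^{*}(x) \leq f(x)$; for the lower bound, using cotransitivity of $<$ on $\Real$ with cut point $\omega_{f}(\epsilon)/2 < \omega_{f}(\epsilon)$, for each $y$ either $d(x,y) < \omega_{f}(\epsilon)$, whence $f(x) - f(y) \leq \epsilon$ and so $f(y) + \sigma d(x,y) \geq f(x) - \epsilon$, or $d(x,y) > \omega_{f}(\epsilon)/2$, whence $\sigma d(x,y) > 2M \geq f(x) - f(y)$ and the same inequality follows. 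Thus $g^{*}(x) \geq f(x) - \epsilon$, and the dual computation gives $f \leq {^{*}}g \leq f + \epsilon$.

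For~(ii), the inequality is almost forced: if $e \in \Lip(X,\sigma)$ and $e \leq f$, then $e(x) \leq e(y) + \sigma d(x,y) \leq f(y) + \sigma d(x,y)$ for every $y \in X$, and taking the infimum over $y$ gives $e \leq g^{*}$; the symmetric argument gives~(iii). Uniqueness~(iv) is then immediate, since any pair $(g_{1}, g_{2})$ of elements of $\Lip(X,\sigma)$ satisfying (i)-(iii) has $g_{1} \leq g^{*}$ (by the maximality of $g^{*}$ applied to $g_{1} \leq f$) and $g^{*} \leq g_{1}$ (by the maximality of $g_{1}$ applied to $g^{*} \leq f$), and analogously for $g_{2}$ and ${^{*}}g$. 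The main technical point, as in other constructive variants of McShane-Whitney, lies in the existence of the infima and suprema; here this is secured by the total boundedness of $X$ together with the uniform continuity in $y$ of the envelope functionals.
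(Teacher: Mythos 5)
Your construction is essentially the paper's: the same envelopes $g^{*}(x) = \inf_{y}(f(y)+\sigma d(x,y))$ and ${^{*}}g(x) = \sup_{y}(f(y)-\sigma d(x,y))$ with $\sigma$ a constant multiple of $M/\omega_{f}(\epsilon)$, the same appeal to total boundedness and uniform continuity for the existence of the infima and suprema, and the same arguments for (ii)--(iv). The only genuine difference is local: where you prove $f-\epsilon \leq g^{*}$ positively via a cotransitivity case split ($d(x,y)<\omega_{f}(\epsilon)$ or $d(x,y)>\omega_{f}(\epsilon)/2$), which is what forces your doubled constant $\sigma = 4M/\omega_{f}(\epsilon)$, the paper keeps $\sigma = 2M_{f}/\omega_{f}(\epsilon)$ and instead establishes $\neg\big(f(x)-\epsilon > g^{*}(x)\big)$ by contradiction; both routes are constructively sound.
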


\begin{proof}
(i) Let $\omega_{f}$ be a modulus of uniform continuity of $f$, and $M_{f} > 0$ a bound of $f$. 
We define the functions $h_{x}: X \rightarrow \Real$ and $g^* : X \rightarrow \Real$ by
$$h_{x} := f + \sigma d_{x},$$
$$\sigma := \frac{2M_{f}}{\omega_{f}(\epsilon)} > 0,$$
$$g^*(x) := \inf \{h_{x}(y) \mid y \in X\} = \inf \{f(y) + \sigma d(x, y) \mid y \in X\},$$ 
for every $x \in X$. Note that $g^*(x)$ is well-defined, since $h_{x} \in C_{u}(X)$ and the infimum of 
$h_{x}$ exists, since $X$ is totally bounded (see~\cite{BB85}, p.94 and p.38).
First we show that $g \in \Lip(X, \sigma)$. If $x_{1}, x_{2}, y \in X$, the inequality 
$d(x_{1}, y) \leq d(x_{2}, y) + d(x_{1}, x_{2})$ implies that 
$f(y) + \sigma d(x_{1}, y) \leq (f(y) + \sigma d(x_{2}, y)) + \sigma d(x_{1}, x_{2})$, hence
$g^*(x_{1}) \leq (f(y) + \sigma d(x_{2}, y)) + \sigma d(x_{1}, x_{2})$, therefore
$g^*(x_{1}) \leq g^*(x_{2}) + \sigma d(x_{1}, x_{2})$, or 
$g^*(x_{1}) - g^*(x_{2}) \leq \sigma d(x_{1}, x_{2})$.
Starting with the inequality $d(x_{2}, y) \leq d(x_{1}, y) + d(x_{1}, x_{2})$ and working similarly we get that 
$g^*(x_{2}) - g^*(x_{1}) \leq \sigma d(x_{1}, x_{2})$, therefore $|g^*(x_{1}) - g^*(x_{2})| \leq \sigma d(x_{1}, x_{2})$.
Next we show that
$$\forall_{x \in X}\big(f(x) - \epsilon \leq g^*(x) \leq f(x)\big).$$
Since $f(x) = f(x) + \sigma d(x, x) = h_{x}(x) \geq 
\inf \{h_{x}(y) \mid y \in X\} = g^*(x)$, for every $x \in X$, we have that $g^* \leq f$. Next we show that
$\forall_{x \in X}(f(x) - \epsilon \leq g^*(x))$. For that we fix $x \in X$ and we show that
$\neg{(f(x) - \epsilon > g^*(x))}$.
Note that if $A \subseteq \Real, b \in \Real$, then\footnote{By the definition of $\inf A$ in~\cite{BB85}, p.37, 
we have that
$\forall_{\epsilon > 0}\exists_{a \in A}(a < \inf A + \epsilon)$, therefore if $b > \inf A$ and 
$\epsilon = b - \inf A > 0$ we get that $\exists_{a \in A}(a < \inf A + (b - \inf A) = b)$.}
$b > \inf A \Rightarrow \exists_{a \in A}(a < b)$. Therefore,
\begin{align*}
 f(x) - \epsilon > g^*(x) & \Leftrightarrow f(x) - \epsilon > \inf \{f(y) + \sigma d(x, y) \mid y \in X\}\\
& \Rightarrow \exists_{y \in X}(f(x) - \epsilon > f(y) + \sigma d(x, y)) \\
& \Leftrightarrow  \exists_{y \in X}(f(x) - f(y) > \epsilon + \sigma d(x, y)).
\end{align*}
For this $y$ we show that $d(x, y) \leq \omega_{f}(\epsilon)$. If $d(x, y) > \omega_{f}(\epsilon)$, we have that 
$$2M_{f} \geq f(x) + M_{f} \geq f(x) - f(y) > \epsilon + 2M_{f}\frac{d(x, y)}{\omega_{f}(\epsilon)} > \epsilon
+ 2M_{f} > 2M_{f},$$
which is a contradiction. Hence, by the uniform continuity of $f$ we get that $|f(x) - f(y)| \leq \epsilon$, therefore
the contradiction $\epsilon > \epsilon$ is reached, since
$$\epsilon \geq |f(x) - f(y)| \geq f(x) - f(y) > \epsilon + \sigma d(x, y) \geq \epsilon.$$
Next we define the functions $h_{x}^{*}: X \rightarrow \Real$ and 
${^{*}}g: X \rightarrow \Real$ by
$$h_{x}^{*} := f - \sigma d_{x},$$
$${^{*}}g(x) := \sup \{h_{x}^{*}(y) \mid y \in X\} = \sup \{f(y) - \sigma d(x, y) \mid y \in X\},$$ 
for every $x \in X$, and $\sigma = \frac{2M_{f}}{\omega_{f}(\epsilon)}$. Similarly\footnote{To show that 
$\neg{(g^{*}(x) > f(x) + \epsilon)}$ we just use the fact that if $A \subseteq \Real, b \in \Real$, then
$\sup A > b \Rightarrow \exists_{a \in A}(a > b)$. The function ${^{*}}g$ is mentioned in~\cite{web}, 
where non-constructive properties of the classical $(\Real, <)$ are used.}
to the proof for $g^*$, we get that ${^{*}}g \in \Lip(X, \sigma)$ and 
$$\forall_{x \in X}(f(x) \leq {^{*}}g(x) \leq f(x) + \epsilon).$$
(ii) Let $e \in \Lip(X, \sigma)$ such that $e \leq f$. If we fix some $x \in X$, then for every $y \in X$ we have that
$e(x) - e(y) \leq |e(x) - e(y)| \leq \sigma d(x, y)$, hence $e(x) \leq e(y) + \sigma d(x, y) \leq  
f(y) + \sigma d(x, y)$, therefore $e(x) \leq g^*(x)$.\\
(iii) Let $e \in \Lip(X, \sigma)$ such that $f \leq e$. If we fix some $x \in X$, then for every $y \in X$ 
we have that $e(y) - e(x) \leq |e(y) - e(x)| \leq \sigma d(x, y)$, hence
$f(y) - \sigma d(x, y) \leq  e(y) - \sigma d(x, y) \leq e(x)$, therefore ${^{*}}g(x) \leq e(x)$.\\
(iv) Let $h^{*}, {^{*}}h \in \Lip(X, \sigma)$ such that the following hold:\\[1mm]
(i)$^{'}$ $f - \epsilon \leq h^{*} \leq f \leq {^{*}}h \leq f + \epsilon$.\\
(ii)$^{'}$ For every $e \in \Lip(X, \sigma)$, $e \leq f \Rightarrow e \leq h^{*}$.\\
(iii)$^{'}$ For every $e \in \Lip(X, \sigma)$, $f \leq e \Rightarrow {^{*}}h \leq e$.\\[1mm]
Since $h^{*} \leq f$, we get by (ii) that $h^{*} \leq g^*$. Since $g^{*} \leq f$, we get by (ii)$^{'}$
that $g^{*} \leq h^*$. Using similarly (iii) and (iii)$^{'}$, we get ${^{*}}h = {^{*}}g$.
\end{proof}

From Theorem~\ref{thm: density1}(i) we get immediately the following corollary.

\begin{corollary}\label{crl: density2} 
If $X$ is totally bounded, then $\Lip(X)$ is uniformly dense in $C_{u}(X)$. 
 
\end{corollary}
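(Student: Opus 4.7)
The plan is to read off the corollary almost immediately from Theorem~\ref{thm: density1}(i), since that theorem already produces, for each $f \in C_u(X)$ and each $\epsilon > 0$, an explicit Lipschitz approximant of $f$ within uniform distance $\epsilon$. So the proof is essentially a translation of the two-sided inequality $f - \epsilon \leq g^* \leq f$ into a statement about the uniform (supremum) metric on $C_u(X)$.

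Concretely, I would fix an arbitrary $f \in C_u(X)$ and an arbitrary $\epsilon > 0$, and invoke Theorem~\ref{thm: density1} to obtain some $\sigma > 0$ and a function $g^* \in \Lip(X, \sigma)$ satisfying $f - \epsilon \leq g^* \leq f$. Since $\Lip(X, \sigma) \subseteq \Lip(X)$, this exhibits an element of $\Lip(X)$ with $|f(x) - g^*(x)| \leq \epsilon$ for every $x \in X$; in particular, the uniform distance $\sup_{x \in X} |f(x) - g^*(x)|$ is bounded by $\epsilon$ (and here no suprema over the whole space need be formed non-constructively, since a pointwise bound of $\epsilon$ suffices for uniform $\epsilon$-closeness in $C_u(X)$). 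Because $f$ and $\epsilon$ were arbitrary, this shows that every $\epsilon$-neighbourhood of $f$ in the uniform metric meets $\Lip(X)$, which is exactly the uniform density statement.

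There is essentially no obstacle here: the real work is done inside Theorem~\ref{thm: density1}, whose proof explicitly builds the approximant $g^*$ with a Lipschitz constant $\sigma = 2M_f/\omega_f(\epsilon)$ depending on the bound $M_f$ and on the modulus of uniform continuity of $f$. Thus the corollary is just a packaging step and can be presented in a couple of sentences. One minor point worth mentioning in the write-up is that total boundedness of $X$ was needed in Theorem~\ref{thm: density1} both to guarantee that $f$ is bounded (so $M_f$ exists) and to ensure the infimum defining $g^*$ actually exists; so the hypothesis on $X$ is already fully consumed by the appeal to that theorem, and the corollary requires no additional constructive assumptions.
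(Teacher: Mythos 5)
Your proposal is correct and matches the paper exactly: the paper derives the corollary immediately from Theorem~\ref{thm: density1}(i), reading off the pointwise bound $f - \epsilon \leq g^{*} \leq f$ as uniform $\epsilon$-closeness of the Lipschitz function $g^{*}$ to $f$. Your additional remarks on where total boundedness is consumed are accurate but not needed for the packaging step.
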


%
%
%

The constructive behavior of Lipschitz functions is  different from the classical one. For example,
Lebesgue's theorem on the almost everywhere differentiability of a Lipschitz function $f : (a, b) \to \Real$,
and hence Rademacher's generalization (see~\cite{He05}, p.~18), do not hold constructively; Demuth has
shown in~\cite{De69} that in the recursive variety RUSS of BISH (see~\cite{BR87}) there is a Lipschitz 
function $f : [0, 1] \to \Real$,  which is nowhere differentiable. As we explain in the rest of this paper, 
there are important differences between the classical and the constructive theory of Lipschitz functions at 
an even more elementary level.

\begin{definition}\label{def: M0}
If $f \in \Fii(X, Y)$, we define the following sets:
$$\Lambda(f) := \{\sigma \geq 0 \mid \forall_{x, y \in X}(\rho(f(x), f(y)) \leq \sigma d(x, y))\},$$
$$\Xi(f) := \{\sigma \geq 0 \mid \forall_{x, y \in X}(\rho(f(x), f(y)) \geq \sigma d(x, y))\},$$
$$M_{0}(f) := \{\sigma_{x, y}(f) \mid (x, y) \in X_{0}\},$$
$$\sigma_{x, y}(f) := \frac{\rho(f(x), f(y))}{d(x, y)}.$$
\end{definition}

\begin{proposition}\label{prp: m}
 If $f \in \Fii(X, Y)$, then $\Lambda(f) = [M_{0}(f))$ and
 $\Xi(f) = (M_{0}(f)]$.
\end{proposition}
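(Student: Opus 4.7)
The plan is to prove each of the two set equalities by double inclusion, reducing each inclusion to a manipulation of the ratio $\sigma_{x,y}(f)$ on the set $X_0$ of $d$-distinct pairs. The two forward inclusions are routine. For $\Lambda(f) \subseteq [M_0(f))$, take $\sigma \in \Lambda(f)$ and any $(x,y) \in X_0$; since $d(x,y) > 0$, I divide the defining inequality $\rho(f(x), f(y)) \leq \sigma d(x,y)$ by $d(x,y)$ to obtain $\sigma_{x,y}(f) \leq \sigma$, whence $\sigma \geq M_0(f)$. The symmetric argument for $\Xi(f) \subseteq (M_0(f)]$ produces $\sigma \leq \sigma_{x,y}(f)$ on $X_0$ and hence $\sigma \leq M_0(f)$.

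The reverse inclusion $[M_0(f)) \subseteq \Lambda(f)$ is the constructively delicate step. Given $\sigma \geq M_0(f)$ and arbitrary $x, y \in X$, I would establish $\rho(f(x), f(y)) \leq \sigma d(x, y)$ by contradiction: suppose $\rho(f(x), f(y)) > \sigma d(x, y)$. Since $\sigma \geq 0$ and $d(x, y) \geq 0$, cotransitivity of $<$ at $0$ forces $\rho(f(x), f(y)) > 0$. Invoking the strong extensionality of $f$ as a morphism between the apartness sets $(X, d)$ and $(Y, \rho)$ in the Bishop sense --- that is, $\rho(f(x), f(y)) > 0 \To d(x, y) > 0$ --- I conclude $(x, y) \in X_0$, so that $\sigma_{x, y}(f) \in M_0(f)$. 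The hypothesis $\sigma \geq M_0(f)$ then yields $\sigma \geq \sigma_{x, y}(f)$, i.e., $\sigma d(x, y) \geq \rho(f(x), f(y))$, contradicting the supposition.

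For the reverse inclusion $(M_0(f)] \subseteq \Xi(f)$ the situation is cleaner: supposing $\rho(f(x), f(y)) < \sigma d(x, y)$ at once forces $\sigma d(x, y) > 0$, and for two nonnegative reals the positivity of the product implies positivity of each factor, so $d(x, y) > 0$ without any appeal to extensionality of $f$. Then $\sigma_{x, y}(f) \in M_0(f)$ and the hypothesis $\sigma \leq M_0(f)$ give $\sigma \leq \sigma_{x, y}(f)$, i.e., $\sigma d(x, y) \leq \rho(f(x), f(y))$, again contradicting the supposition. The main obstacle is thus located in the preceding paragraph: one cannot case-split on whether $d(x, y) = 0$ or $d(x, y) > 0$, and the step $\rho(f(x), f(y)) > 0 \To d(x, y) > 0$ is precisely strong extensionality, the standing convention in $\BISH$ for functions between sets carrying an apartness relation.
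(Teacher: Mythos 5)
Your overall architecture (double inclusion, establishing each inequality by refuting its negation) matches the paper's, and the forward inclusions as well as the whole of $\Xi(f) = (M_{0}(f)]$ are fine. The genuine gap is the step $\rho(f(x), f(y)) > 0 \To d(x, y) > 0$ in the inclusion $[M_{0}(f)) \subseteq \Lambda(f)$. Strong extensionality is \emph{not} a standing convention for arbitrary functions in $\BISH$: the paper defines $\Fii(X, Y)$ as the set of all functions from $X$ to $Y$, with no continuity or strong extensionality requirement, and what ordinary functionality gives you is only $x = y \To f(x) = f(y)$, hence $\rho(f(x), f(y)) > 0 \To \neg\neg\big(d(x, y) > 0\big)$. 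Eliminating that double negation is an instance of Markov's principle; in Bishop--Bridges and Bridges--V\^{\i}\c{t}\u{a} strong extensionality is something one \emph{proves} for (pointwise or sequentially) continuous maps, not something one assumes of every element of $\Fii(X, Y)$. So as written your argument proves the proposition only for strongly extensional $f$, which is strictly weaker than the stated claim.

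The repair is the route the paper takes, and it needs no extra hypothesis on $f$. Assume $\rho(f(x), f(y)) > \sigma d(x, y)$. First suppose $d(x, y) > 0$: then $(x, y) \in X_{0}$, so $\sigma_{x, y}(f) \in M_{0}(f)$ and $\sigma \geq M_{0}(f)$ gives $\rho(f(x), f(y)) \leq \sigma d(x, y)$, a contradiction. This establishes $\neg\big(d(x, y) > 0\big)$, which constructively yields $d(x, y) \leq 0$, hence $d(x, y) = 0$, hence $x = y$. Now ordinary extensionality gives $f(x) = f(y)$, so $\rho(f(x), f(y)) = 0$ and $\sigma d(x, y) = 0$, and the standing assumption collapses to $0 > 0$. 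Note that this is exactly the asymmetry you yourself observed between the two equalities: for $\Xi(f)$ the assumed strict inequality directly forces $d(x, y) > 0$, whereas for $\Lambda(f)$ one must pass through $\neg\big(d(x, y) > 0\big) \To x = y$ rather than through strong extensionality.
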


 \begin{proof}
 First we show that $\Lambda(f) = [M_{0}(f))$. If $b \geq M_{0}(f)$, we show that $b \in \Lambda(f)$
 i.e., $\forall_{x, y \in X}(\rho(f(x), f(y)) \leq b d(x, y)).$
 Since $b \geq 0$, let $x, y \in X$ such that 
 $\rho(f(x), f(y)) > b d(x, y)$.
 Suppose next that $d(x, y) > 0$, hence $\sigma_{x, y}(f) \in M_{0}(f)$, and since
 $b \geq M_{0}(f)$, we get that $\sigma_{x, y}(f) \leq b \Leftrightarrow \rho(f(x), f(y)) \leq b d(x, y)$ which 
 together with our initial hypothesis $b d(x, y) < \rho(f(x), f(y))$ leads to a contradiction.
 Hence, $d(x, y) \leq 0 \Leftrightarrow d(x, y) = 0 \Leftrightarrow x = y$, and our initial hypothesis
 is reduced to $0 > 0$. The converse implication, $\sigma \in \Lambda(f) \Rightarrow
 \sigma \geq M_{0}(f)$ follows immediately. The proof of the equality $\Xi(f) = (M_{0}(f)]$ is similar.
 \end{proof}

By the classical completeness of $\Real$, if $f \in \LXY$, then $\inf \Lambda(f)$ exists. Constructively, we don't 
have in 
general the existence of $\inf \Lambda(f)$. Actually, the statement ``$\inf \Lambda(f)$ exists, for every $f \in \Lip(A)$
and every $A \subseteq \Real$'' implies $\WPEM$. To see this, we work as follows. If $P$ is a syntactically well-formed 
proposition, let $A := \{0\} \cup \{x \in \Real \mid x = 1 \ \& \ P\}$. If $f : A \to \Real$ is defined by $f(a) := a$, 
for every $a \in A$, then $f \in \Lip(A, 1)$ and $1 \in \Lambda(f)$. If $m := \inf \Lambda(f)$, then by the constructive
version of the classical trichotomy of reals (Corollary (2.17) in~\cite{BB85}, p.~26) we have that $m > 0$ or $m < 1$.
If $m > 0$, we suppose $\neg P$, hence $A = \{0\}$. In this case $0 \in \Lambda(f)$, and we have that $m \leq 0 < m$,
which is a contradiction, hence we showed $\neg \neg P$. If $m < 1$, we suppose $P$, hence $A = \{0, 1\}$. By the
definition of $\inf \Lambda(f)$ we can find $\epsilon > 0$ and $\sigma \in \Lambda(f)$ such that
$\sigma < m + \epsilon < 1$. Hence, $1 = |0 - 1| \leq \sigma |0 - 1| = \sigma < 1$, which is a contradiction. 
Hence we showed $\neg P$.

Classically, if $\inf \Lambda(f)$ exists, then $\sup M_0(f)$ exists and it is equal to $\inf \Lambda(f)$.
Constructively, from the existence of $\inf \Lambda(f)$ we only get
the existence of $\lub M_0(f)$. If $\sup M_{0}(f)$ exists though, we can infer constructively the existence
of $\inf \Lambda(f)$.

\begin{proposition}\label{prp: infsup} Let $f \in \Lip(X, Y)$. \\
$(i)$ If $\sup M_{0}(f)$ exists, then $\inf \Lambda(f)$ exists and $\inf \Lambda(f)  
= \min \Lambda(f) = \sup M_{0}(f)$. \\
$(ii)$ If $\inf \Lambda(f)$ exists, then $\lub M_{0}(f)$ exists and
$\lub M_{0}(f) = \inf \Lambda(f)$.\\
$(iii)$ If $\lub M_{0}(f)$ exists, then $\inf \Lambda(f)$ exists and
$\inf \Lambda(f) = \lub M_{0}(f)$.

\end{proposition}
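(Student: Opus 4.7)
The plan is to use Proposition~\ref{prp: m} to rewrite $\Lambda(f) = [M_{0}(f))$ throughout, so that the three assertions become statements purely about $\sup$, $\inf$, and $\lub$ of the set $M_{0}(f)$ versus its set of upper bounds. For (i), I would set $s := \sup M_{0}(f)$ and first note that $s$ is an upper bound of $M_{0}(f)$, so $s \in [M_{0}(f)) = \Lambda(f)$. To show $s \leq \sigma$ for every $\sigma \in \Lambda(f) = [M_{0}(f))$, I would argue by contradiction: assuming $\sigma < s$, Bishop's supremum approximation applied with $\epsilon := s - \sigma > 0$ yields some $a \in M_{0}(f)$ with $a > s - \epsilon = \sigma$, contradicting $\sigma \geq M_{0}(f)$. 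Hence $s = \min \Lambda(f)$, and since $s \in \Lambda(f)$ the Bishop-inf approximation clause for $\inf \Lambda(f)$ is trivially satisfied by taking the witness $\sigma := s$.

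For (iii), the statement is precisely the Mandelkern remark recalled just after Definition~\ref{def: real}, namely ``if $\lub A$ exists, then $\inf [A)$ exists and $\inf [A) = \lub A$,'' applied with $A := M_{0}(f)$ and again invoking $\Lambda(f) = [M_{0}(f))$ from Proposition~\ref{prp: m}.

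The main obstacle is (ii), where the interplay between Bishop's $\inf$ of $\Lambda(f)$ and Mandelkern's $\lub$ of $M_{0}(f)$ requires care, since the former is a priori a weaker notion than the latter. Letting $m := \inf \Lambda(f) = \inf [M_{0}(f))$, I would first verify that $m$ is itself an upper bound of $M_{0}(f)$: given $a \in M_{0}(f)$ and $\epsilon > 0$, Bishop's inf approximation supplies some $b \in [M_{0}(f))$ with $b < m + \epsilon$; since $b$ is an upper bound of $M_{0}(f)$ we have $a \leq b < m + \epsilon$, and letting $\epsilon$ range over the positive reals gives $a \leq m$. Thus $m \in [M_{0}(f)) = \Lambda(f)$, i.e.\ $m \geq M_{0}(f)$. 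Finally, the defining clause $m \leq \Lambda(f)$ immediately yields $m \leq b$ for every $b \in [M_{0}(f))$, which combined with $m \geq M_{0}(f)$ is precisely the definition of $m = \lub M_{0}(f)$.
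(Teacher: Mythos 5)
Your proposal is correct and follows essentially the same route as the paper: Proposition~\ref{prp: m} to identify $\Lambda(f)$ with $[M_{0}(f))$, the least-upper-bound property of $\sup$ for (i), and Mandelkern's remark for (iii). Your treatment of (ii) is in fact slightly more explicit than the paper's, which justifies $\inf \Lambda(f) \geq M_{0}(f)$ only by a reference back to (i), whereas your $\epsilon$-approximation argument supplies that detail in full.
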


\begin{proof}
 (i) Since for $l = \sup M_{0}(f)$ we have that $l \geq M_{0}(f)$, by Proposition~\ref{prp: m}
 we get that $l \in \Lambda(f)$. Next we show that 
 $l \leq \Lambda(f)$. If $\sigma \in \Lambda(f)$, then $\sigma \geq M_{0}(f)$, therefore $\sigma \geq l$,
 since the supremum of a set is its least upper bound. Since $l \in \Lambda(f)$, if $\epsilon >0$, then
 $l < l + \epsilon$, hence $l = \inf \Lambda(f)$.\\
 (ii) If $v = \inf \Lambda(f)$, then $v \geq M_{0}(f)$, as we have explained in (i). Suppose next that 
 $b \geq M_{0}(f)$.
 We show that $b \geq v$, by supposing $b < v$ and reaching a contradiction. Since $b \geq M_{0}(f)$,
 by Proposition~\ref{prp: m} we have that $b \in \Lambda(f)$, and consequently $v \leq b < v$.\\
 (iii) By Proposition~\ref{prp: m} we have that $\Lambda(f) = [M_{0}(f))$, hence by Mandelkern's remark after
Definition~\ref{def: real} we get that $\lub M_{0}(f) = \inf [M_{0}(f)) = \inf \Lambda(f)$.
\end{proof}

\begin{definition}\label{def: L-norm} Let $f \in \Lip(X, Y)$. 
If $\sup M_{0}(f)$ exists, we call $L(f) := \sup M_{0}(f)$ the \textit{Lipschitz constant} of $f$. 
If $\lub M_{0}(f)$ exists, we call $L^*(f) := \lub M_{0}(f)$ the \textit{weak Lipschitz constant} of $f$.
\end{definition}

Constructively, $L(f)$ does not generally exist. Actually, the statement ``$L(f)$ exists, for every
$f \in \Lip(l_2)$'' implies the limited principle of omniscience (LPO), where LPO is a weak form of PEM, 
and $l_2 := \{(x_n) \in \mathbb{F} (\mathbb{N}, \Real) \mid \sum_{n \in \mathbb{N}}|x_n|^2 \in \Real \}$ is
equipped with the $2$-norm. This is explained as follows. A linear functional on a normed space $X$ is bounded,
if there is $\sigma > 0$ such that $|f(x)| \leq \sigma ||x||$, for every $x \in X$ (see~\cite{BV06}, p.~51). By
the linearity of $f$ we then get $f \in \Lip(X, \sigma)$. Since in this case it is easy to show that
$M_0 (f) = \{|f(x)| \mid ||x|| = 1\}$, the existence of $\sup M_0 (f)$ is equivalent to the existence of $||f||$.
Hence, the statement ``$L(f)$ exists, for every $f \in \Lip(l_2)$'' implies the statement ``every bounded linear
functional on $l_2$ is normable''. The last statement implies LPO (see~\cite{Is16}).

%
%
%
%


If $L(f)$ exists, and since $L(f) \geq M_{0}(f)$,
by Proposition~\ref{prp: m} we get 
$\forall_{x, y \in X}(\rho(f(x), f(y)) \leq L(f) d(x, y)),$
i.e., $f \in \Lip(X, Y, L(f)).$
If $L^*(f)$ exists, then proceeding similarly we get 
$\forall_{x, y \in X}(\rho(f(x), f(y)) \leq L^{*}(f) d(x, y))$ i.e., $f \in \Lip(X, Y, L^{*}(f)).$

\begin{proposition}\label{prp: beforeLBP1} Let $A \subseteq X$, $f \in \Lip(A, Y)$ and $g \in \Fii(X, Y)$ such that
$$g_{|A} = f \ \& \ \forall_{\sigma \geq 0}(f \in
\Lip(A, Y, \sigma) \Rightarrow g \in \Lip(X, Y, \sigma)).$$
$(i)$ If $L(f)$ exists, then $L(g)$ exists and $L(g) = L(f)$. \\
$(ii)$ If $L^*(f)$ exists, then $L^*(g)$ exists and $L^{*}(g) = L^{*}(f)$.
\end{proposition}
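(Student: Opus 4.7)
The plan is to reduce both parts to the elementary set inclusion $M_{0}(f) \subseteq M_{0}(g)$ combined with the hypothesis that every Lipschitz constant of $f$ is also a Lipschitz constant of $g$. Since $g_{|A} = f$ and the metric on $A$ is the restriction of $d$, every pair $(x, y)$ with $x, y \in A$ and $d(x, y) > 0$ lies in $X_{0}$ and satisfies $\sigma_{x, y}(f) = \sigma_{x, y}(g)$; this gives $M_{0}(f) \subseteq M_{0}(g)$.

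For (i), I would assume $L(f) = \sup M_{0}(f)$ exists. By the remark stated immediately before the proposition, the existence of $L(f)$ forces $f \in \Lip(A, Y, L(f))$, so by hypothesis $g \in \Lip(X, Y, L(f))$, which by Proposition~\ref{prp: m} is equivalent to $L(f) \geq M_{0}(g)$. Thus $L(f)$ is an upper bound of $M_{0}(g)$. For the approximation half of $\sup$, given $\epsilon > 0$, the definition of $L(f) = \sup M_{0}(f)$ yields some $\sigma \in M_{0}(f)$ with $\sigma > L(f) - \epsilon$, and the inclusion $M_{0}(f) \subseteq M_{0}(g)$ places this same $\sigma$ in $M_{0}(g)$. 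Hence $L(f) = \sup M_{0}(g) = L(g)$.

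For (ii), I would argue in the same style but with least upper bounds in place of suprema. Assuming $L^{*}(f) = \lub M_{0}(f)$ exists, the preceding remark again yields $f \in \Lip(A, Y, L^{*}(f))$, hence $g \in \Lip(X, Y, L^{*}(f))$, so $L^{*}(f) \geq M_{0}(g)$. To verify the least-upper-bound clause, let $b \geq M_{0}(g)$; the inclusion $M_{0}(f) \subseteq M_{0}(g)$ gives $b \geq M_{0}(f)$, whence $b \geq L^{*}(f)$ by definition of $\lub M_{0}(f)$. Therefore $L^{*}(g) = \lub M_{0}(g) = L^{*}(f)$.

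I do not foresee any real obstacle; the entire argument pivots on the two ingredients above, and in particular on the fact (used twice) that the candidate constant $L(f)$, respectively $L^{*}(f)$, already belongs to $\Lambda(f)$, which is exactly what allows the hypothesis on $g$ to be invoked at the critical constant.
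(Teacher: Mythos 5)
Your proposal is correct and follows essentially the same route as the paper's proof: both reduce the statement to the inclusion $M_{0}(f) \subseteq M_{0}(g)$, obtain $L(f) \geq M_{0}(g)$ (resp.\ $L^{*}(f) \geq M_{0}(g)$) by feeding $f \in \Lip(A, Y, L(f))$ into the hypothesis and applying Proposition~\ref{prp: m}, and then verify the approximation clause of $\sup$ (resp.\ the least-upper-bound clause of $\lub$) via that inclusion. No gaps.
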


 \begin{proof} (i) If $L(f) = \sup M_{0}(f)$ we show that $L(f) = \sup M_{0}(g)$, where
 $M_{0}(f) = \{\sigma_{a, b}(f) \mid (a, b) \in A_{0}\},  M_{0}(g) = \{\sigma_{x, y}(g) \mid (x, y) \in X_{0}\},$
 and $M_{0}(f) \subseteq M_{0}(g)$. First we show that $L(f) \geq M_{0}(g)$. Since $L(f) \geq 0$ and
 $f \in \Lip(A, Y, L(f))$, we get by hypothesis that $g$ is also in $\Lip(X, Y, L(f))$, therefore
 $L(f) \geq M_{0}(g)$. If $\epsilon > 0$, there exists $\sigma_{a, b}(f) \in M_{0}(f)$
 such that $\sigma_{a, b}(f) > L(f) - \epsilon$, which implies that 
 $\sigma_{a, b}(f) = \sigma_{a, b}(g) > L(f) - \epsilon$, and $\sigma_{a, b}(g) \in M_{0}(g)$, 
 since $A_{0} \subseteq X_{0}$. \\
 (ii) If $L^{*}(f) = \lub M_{0}(f)$ we show that $L^{*}(f) = \lub M_{0}(g)$. 
 First we show that $L^{*}(f) \geq M_{0}(g)$. As in (i), since $L^{*}(f) \geq 0$ and
 $f \in \Lip(A, Y, L^{*}(f))$, we get by hypothesis that $g$ is also in $\Lip(X, Y, L^{*}(f))$, therefore
 $L^{*}(f) \geq M_{0}(g)$. If $m \geq M_{0}(g)$, then $m \geq M_{0}(f)$, since $M_{0}(g) \subseteq M_{0}(g)$,
 therefore $m \geq L^{*}(f)$.
 \end{proof}

Note that if $f \in \Lip(A, Y), g \in \Lip(X, Y)$ such that $L(f), L(g)$ exist and $L(f) = L(g)$, then
it is immediate to see that 
$\forall_{\sigma \geq 0}(f \in \Lip(A, Y, \sigma) \Rightarrow g \in \Lip(X, Y, \sigma))$.
Next follows the Lipschitz-version of the extendability of a uniformly continuous function defined 
on a dense subset of a metric space with values in a complete metric space. 
Its expected proof, which is based on the proof of Lemma 3.7 in~\cite{BB85}, pp.~91-2, is omitted. 

\begin{proposition}\label{prp: LBP1} Let $D \subseteq X$ be dense in $X$, $Y$ complete,
and $f \in \Lip(D, Y)$.\\[1mm]
$(i)$ $\exists_{!g \in \Fii(X, Y)}\big(g_{|A} = f \ \& \ \forall_{\sigma \geq 0}(f \in
\Lip(D, Y, \sigma) \Rightarrow g \in \Lip(X, Y, \sigma))\big)$.\\[1mm]
$(ii)$ If $L(f)$ exists and $g$ is as in $($i$)$, then $L(g)$ exists and $L(g) = L(f)$.\\[1mm]
$(iii)$ If $L^*(f)$ exists and $g$ is as in $($i$)$, then $L^*(g)$ exists and $L^{*}(g) = L^{*}(f)$.
\end{proposition}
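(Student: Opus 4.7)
The plan is to mimic the classical extension-by-density argument for uniformly continuous functions, making explicit the two points where the Lipschitz character is used. For part (i), I would fix $x \in X$ and, using density of $D$, invoke dependent choice to obtain a sequence $(d_n)$ in $D$ with $d(x, d_n) \leq 1/n$. Picking any $\sigma \geq 0$ with $f \in \Lip(D, Y, \sigma)$, the estimate
\[\rho(f(d_n), f(d_m)) \leq \sigma d(d_n, d_m) \leq \sigma\bigl(1/n + 1/m\bigr)\]
shows $(f(d_n))$ is Cauchy in $Y$, so completeness of $Y$ lets me define $g(x) := \lim_{n} f(d_n)$. A second application of the Lipschitz inequality, applied to $\rho(f(d_n), f(d_n'))$ for a second approximating sequence $(d_n')$, shows $g(x)$ is independent of the choice of sequence; taking $d_n = x$ for $x \in D$ shows $g$ extends $f$.

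Next I would verify the key implication: if $f \in \Lip(D, Y, \sigma)$ for some particular $\sigma \geq 0$, then $g \in \Lip(X, Y, \sigma)$. Given $x_1, x_2 \in X$ with approximating sequences $(d_n^1), (d_n^2) \subseteq D$, the inequality $\rho(f(d_n^1), f(d_n^2)) \leq \sigma d(d_n^1, d_n^2)$ together with the continuity of $\rho$ and $d$ (and the convergence $f(d_n^i) \to g(x_i)$) yields $\rho(g(x_1), g(x_2)) \leq \sigma d(x_1, x_2)$ in the limit. Uniqueness of $g$ reduces to the fact that any two functions in $\Lip(X, Y)$ are uniformly continuous, and uniformly continuous maps into a metric space that agree on the dense set $D$ must agree on $X$: for each $x \in X$, both extensions assign to $x$ the unique limit in $Y$ of $(f(d_n))$ along any sequence $(d_n) \subseteq D$ converging to $x$.

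Parts (ii) and (iii) then require no further work: the unique $g$ constructed in (i) satisfies exactly the hypothesis of Proposition~\ref{prp: beforeLBP1} (with the role of $A$ played by $D$), namely $g_{|D} = f$ and $\forall_{\sigma \geq 0}(f \in \Lip(D, Y, \sigma) \Rightarrow g \in \Lip(X, Y, \sigma))$. Hence, if $L(f)$ exists we conclude from that proposition that $L(g)$ exists and equals $L(f)$, and similarly $L^{*}(g) = L^{*}(f)$ whenever $L^{*}(f)$ exists.

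The main obstacle is the careful constructive bookkeeping in (i): one must use dependent choice to produce the approximating sequences, and one must show that $g(x)$ does not depend on the particular Lipschitz constant $\sigma$ used to witness $f \in \Lip(D, Y)$. This is handled by noting that the definition of $g(x)$ only uses the Cauchy property of $(f(d_n))$, and any single $\sigma$ already witnesses this; the independence of $g(x)$ from the sequence $(d_n)$ then automatically makes $g$ independent of $\sigma$. Apart from this, the argument is a routine adaptation of the proof of Lemma 3.7 in~\cite{BB85}, with the Lipschitz inequality replacing the modulus of uniform continuity at each step.
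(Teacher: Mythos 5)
Your argument is correct and is exactly the proof the paper has in mind: the paper omits it, stating only that it is ``based on the proof of Lemma 3.7 in~\cite{BB85}'', which is precisely the density-plus-completeness extension you carry out, and your reduction of (ii) and (iii) to Proposition~\ref{prp: beforeLBP1} is the intended route as well. No gaps.
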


\section{McShane-Whitney pairs}\label{S:MW}

\subsection{Metric spaces}\label{s:metric}

Throughout this section $X$ is a set equipped with a metric $d$, and $Y$ is a set equipped with a metric $\rho$.

\begin{definition}\label{def: MW}
Let $A \subseteq X$. We call $(X, A)$ a \textit{McShane-Whitney pair}, or simply an \textit{MW-pair}, if for every
$\sigma > 0$ and $g \in \Lip(A, \sigma)$ the functions $g^{*}, {^{*}}g : X \rightarrow \Real$ are well-defined,
where for every $x \in X$
$$g^{*}(x) := \sup \{g(a) - \sigma d(x, a) \mid a \in A\},$$
$${^{*}}g(x) := \inf \{g(a) + \sigma d(x, a) \mid a \in A\}.$$
 \end{definition}

Note that $(X, A)$ is a McShane-Whitney pair if and only if for every $g \in \Lip(A, 1)$ the function $g^{*}$ is 
well-defined, since we can work with $\frac{1}{\sigma}h$, for an arbitrary $\sigma > 0$ and $h \in \Lip(A, \sigma)$,
and use the equality ${^{*}}g = -[(-g)^*]$ for the extension ${^{*}}g$. 
The classical statement  ``every pair $(X, A)$ is a McShane-Whitney pair'' 
cannot be accepted constructively.

\begin{remark}\label{rem: pem2}
The statement ``$(\Real, A)$ is a McShane-Whitney pair, for every non-empty $A \subseteq \Real$'' implies $\PEM$.
\end{remark}

\begin{proof}
If $P$ is a syntactically well-formed proposition, let $A := \{0\} \cup \{x \in \Real \mid x = 1 \ \& \ P\}$. If
$g : A \to \Real$ is defined by $g(a) := a$, for every $a \in A$, then $g \in \Lip(A, 1)$. If $x := 2$, we suppose that
$$g^* (2) := \sup \{a - |2 - a| \mid a \in A\} \in \Real.$$
By the constructive version of the classical trichotomy of reals (Corollary (2.17) in~\cite{BB85}, p.~26) we 
have that $g^*(2) > -\frac{3}{2}$ or $g^*(2) < -1$. In the first case, by the definition of $\sup A$ there 
is $a \in A$ such that $a - |2 - a| >  -\frac{3}{2}$. If $a = 0$, then $0 - |2 - 0| = -2 < - \frac{3}{2}$, 
hence there must be $a \in \{x \in \Real \mid x = 1 \ \& \ P\}$ that satisfies this property. This 
implies $P$ and $g^*(2) = 0 > -\frac{3}{2}$. If $g^*(2) < -1$, we suppose $P$, hence $A = \{0, 1\}$. 
Since then $g^*(2) = 0$, we get the contradiction $0 = g^*(2) < -1$ i.e., we showed $\neg P$.
\end{proof}

The next theorem reveals the computational content of the classical McShane-Whitney theorem.

\begin{theorem}[McShane-Whitney]\label{thm: MW} If $(X, A)$ is an MW-pair and $g \in \Lip(A, \sigma)$, 
the following hold:\\[1mm]
$(i)$ $g^{*}, {^{*}}g \in \Lip(X, \sigma)$.\\[1mm]
$(ii)$ ${g^{*}}_{|A} = ({^{*}}g)_{|A} = g$.\\[1mm]
$(iii)$ $\forall_{f \in \Lip(X, \sigma)}\big(f_{|A} = g \Rightarrow g^{*} \leq f \leq {^{*}}g\big)$.\\[1mm]
$(iv)$ The pair $(g^{*}$, ${^{*}}g)$ is the unique pair of functions satisfying conditions $(i)${-}$(iii)$. 
 \end{theorem}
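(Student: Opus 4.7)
The plan is to follow the same template as the proof of Theorem~\ref{thm: density1}, with two simplifications: the MW-pair hypothesis already supplies the existence of the supremum and infimum appearing in the definitions of $g^{*}$ and ${^{*}}g$, and the target values on $A$ are now exactly $g$ rather than an $\epsilon$-approximation. Throughout I will use only the triangle inequality for $d$, the $\sigma$-Lipschitz property of $g$, and the two constructive principles that (a) $\sup A$ is the least upper bound whenever it exists, and (b) $b > \sup A$ entails the existence of $a \in A$ with $a > b - \epsilon$ for every $\epsilon > 0$ (dually for $\inf$).

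For (i) I will verify $g^{*} \in \Lip(X, \sigma)$ exactly as for $g^{*}$ in the proof of Theorem~\ref{thm: density1}: given $x_{1}, x_{2} \in X$ and $a \in A$, the inequality $d(x_{1}, a) \leq d(x_{2}, a) + d(x_{1}, x_{2})$ passes under $g(a) - \sigma(\cdot)$ and then under the supremum over $a \in A$, and the symmetric inequality closes the argument; ${^{*}}g$ is handled dually. For (ii) I will prove $g^{*}(a) = g(a)$ for $a \in A$ by combining two estimates: taking $b = a$ in the defining supremum gives $g^{*}(a) \geq g(a)$, while the $\sigma$-Lipschitz condition $g(b) - g(a) \leq \sigma d(a, b)$ rearranges to $g(b) - \sigma d(a, b) \leq g(a)$ for every $b \in A$, yielding $g^{*}(a) \leq g(a)$ after taking the supremum. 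The argument for ${^{*}}g$ is again dual.

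For (iii) I will start from an arbitrary $\sigma$-Lipschitz extension $f$ of $g$ and observe that for every $x \in X$ and $a \in A$ the estimate $|f(x) - f(a)| \leq \sigma d(x, a)$ gives the two-sided bound $g(a) - \sigma d(x, a) \leq f(x) \leq g(a) + \sigma d(x, a)$; then taking the supremum (resp.\ infimum) over $a \in A$ sandwiches $f(x)$ between $g^{*}(x)$ and ${^{*}}g(x)$. For (iv), supposing another pair $(h^{*}, {^{*}}h)$ satisfies the analogues of (i)--(iii), I will apply (iii) for $(g^{*}, {^{*}}g)$ to $f := h^{*}$ (which restricts to $g$ by (ii) for $(h^{*}, {^{*}}h)$) to obtain $g^{*} \leq h^{*}$, and symmetrically apply the analogue of (iii) for $(h^{*}, {^{*}}h)$ to $f := g^{*}$ to obtain $h^{*} \leq g^{*}$; the analogous comparison settles ${^{*}}h = {^{*}}g$.

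I do not anticipate a genuine obstacle here, because the delicate constructive issue---the existence of the supremum and infimum in the definitions---has been packaged into the hypothesis that $(X, A)$ is an MW-pair, exactly as the hypothesis of total boundedness packaged it in Theorem~\ref{thm: density1}. The only point requiring a touch of care is the passage from ``for all $a \in A$, $P(a) \leq Q$'' to ``$\sup_{a \in A} P(a) \leq Q$'', which is justified constructively by the least-upper-bound clause in Definition~\ref{def: real}: if it failed, the approximation clause would produce some $a$ violating the hypothesised bound.
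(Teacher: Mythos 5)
Your proposal is correct and follows essentially the same route as the paper's proof: the triangle inequality pushed through the supremum/infimum for (i), the two-sided estimate from the Lipschitz condition for (ii) and (iii), and the cross-application of (iii) for uniqueness in (iv). Your closing observation that only the least-upper-bound and greatest-lower-bound properties of $\sup$ and $\inf$ are needed is exactly the remark the paper makes immediately after its proof.
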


\begin{proof}
(i) Let $x_{1}, x_{2} \in X$ and $a \in A$. Then $d(x_{1}, a) \leq d(x_{2}, a) + d(x_{2}, x_{1})$ and
$\sigma d(x_{1}, a) \leq \sigma d(x_{2}, a) + \sigma d(x_{1}, x_{2})$, therefore
\begin{align*}
 & g(a) + \sigma d(x_{1}, a) \leq (g(a) + \sigma d(x_{2}, a)) + \sigma d(x_{1}, x_{2}) \Rightarrow \\
 & {^{*}g}(x_{1}) \leq (g(a) + \sigma d(x_{2}, a)) + \sigma d(x_{1}, x_{2}) \Rightarrow  \\
& {^{*}g}(x_{1}) - \sigma d(x_{1}, x_{2}) \leq g(a) + \sigma d(x_{2}, a) \Rightarrow \\
 & {^{*}g}(x_{1}) - \sigma d(x_{1}, x_{2}) \leq  {^{*}g}(x_{2}) \Rightarrow \\
 & {^{*}g}(x_{1}) - {^{*}g}(x_{2}) \leq \sigma d(x_{1}, x_{2}).
\end{align*}
If we start from the inequality $d(x_{2}, a) \leq d(x_{1}, a) + d(x_{2}, x_{1})$ and work as above, we get
${^{*}}g(x_{2}) - {^{*}g}(x_{1}) \leq \sigma d(x_{1}, x_{2})$, therefore 
$|{^{*}g}(x_{1}) - {^{*}g}(x_{2})| \leq \sigma d(x_{1}, x_{2})$.
Working similarly we get that $g^{*}$ is an extension of $g$ which is in $\Lip(X, \sigma)$.\\
(ii) We show that
${^{*}g}$ extends $g$. If $a_{0} \in A$, then ${^{*}g}(a_{0}) = \inf \{g(a) + 
\sigma d(a_{0}, a) \mid a \in A\} \leq g(a_{0}) + \sigma d(a_{0}, a_{0}) = g(a_{0})$. If $a \in A$, 
then $g(a_{0}) - g(a) \leq |g(a_{0}) - g(a)|
\leq \sigma d(a_{0}, a)$, hence $g(a) + \sigma d(a_{0}, a) \geq g(a_{0})$. Since $a$ is arbitrary,
${^{*}g}(a_{0}) \geq g(a_{0})$. \\
(iii) If $f \in \Lip(X, \sigma)$ such that $f_{|A} = g$, $x \in X$ and $a \in A$ we have that
\begin{align*}
 & f(x) - g(a) = f(x) - f(a) \leq |f(x) - f(a)| \leq \sigma d(x, a) \Rightarrow \\
 & f(x) \leq g(a) + \sigma d(a, x) \Rightarrow \\
 & f(x) \leq {^{*}g}(x), \\
 & g(a) - f(x) = f(a) - f(x) \leq |f(a) - f(x)| \leq \sigma d(x, a) \Rightarrow \\
 & g(a) - \sigma d(a, x) \leq f(x) \Rightarrow \\
 & g^{*}(x) \leq f(x). 
\end{align*}
(iv) Let $h^{*}, {^{*}}h$ satisfy conditions (i)-(iii). Since 
  ${h^{*}}_{|A} = ({^{*}}h)_{|A} = g$, we have that $g^{*} \leq h^{*} \leq {^{*}}g$ and $g^{*} 
  \leq {^{*}}h \leq {^{*}}g$.  Since ${g^{*}}_{|A} = ({^{*}}g)_{|A} = g$, we have that
 $h^{*} \leq g^{*} \leq {^{*}}h$ and $h^{*} \leq {^{*}}g \leq {^{*}}h$, hence 
 $h^{*} = g^{*}$ and ${^{*}}h = {^{*}}g$.  
\end{proof}

Since in the previous proof we only used the ``least upper bound'' property of $\sup$ and the ``greatest lower bound'' 
property of $\inf$, we could have defined $g^{*}$ and ${^{*}}g$ through $\lub$ and $\glb$, respectively.
Next we recall some fundamental definitions in the constructive theory of metric spaces.


\begin{definition}\label{def: compact}
A space $X$ is \textit{compact}, if it is complete and totally bounded, while it is 
\textit{locally} compact (totally bounded), if 
every bounded subset of $X$ is included in a compact (totally bounded) subset of $X$. 
A subset $A$ of $X$ is \textit{located}, if $d(x, A) := \inf \Delta (x, A)$, where $\Delta(x, A)
:= \{d(x, a) \mid a \in A\}$, exists, for every $x \in X$.
\end{definition}

\begin{proposition}\label{prp: examples} The following sets $X$ and $A$ form MW-pairs.\\[1mm]
$(i)$  $A$ is a totally bounded subset of $X$.\\[1mm]
$(ii)$ $X$ is totally bounded and $A$ is located.\\[1mm]
$(iii)$ $X$ is locally compact $($totally bounded$)$ and $A$ is bounded and located.
\end{proposition}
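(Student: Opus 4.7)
The plan is to reduce all three cases to case~(i), for which the existence of $g^{*}(x)$ and ${^{*}}g(x)$ follows from a standard uniform-continuity argument. Fix $x \in X$, $\sigma > 0$, and $g \in \Lip(A, \sigma)$, and consider the map
\[
\phi_{x} : A \to \Real, \qquad \phi_{x}(a) := g(a) - \sigma d(x, a).
\]
Because $g$ is $\sigma$-Lipschitz and $a \mapsto d(x, a)$ is $1$-Lipschitz, $\phi_{x}$ is $(2\sigma)$-Lipschitz on $A$, hence uniformly continuous. If $A$ is totally bounded, then its uniformly continuous image $\phi_{x}(A)$ is a totally bounded subset of $\Real$ (\cite{BB85}, p.~94), and therefore both $\sup \phi_{x}(A)$ and $\inf \phi_{x}(A)$ exist (\cite{BB85}, p.~38). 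This yields $g^{*}(x)$, and a symmetric argument with $a \mapsto g(a) + \sigma d(x, a)$ yields ${^{*}}g(x)$; so $(X, A)$ is an MW-pair under the hypothesis of~(i).

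For~(ii), the point is to show that a located subset of a totally bounded metric space is itself totally bounded, which reduces~(ii) to~(i). Given $\varepsilon > 0$, choose a finite $(\varepsilon/3)$-approximation $\{x_{1}, \ldots, x_{n}\}$ to $X$. By locatedness the real numbers $d(x_{i}, A)$ exist, and constructive trichotomy (\cite{BB85}, p.~26) lets us decide, for each $i$, whether $d(x_{i}, A) < 2\varepsilon/3$ or $d(x_{i}, A) > \varepsilon/3$. For every index $i$ for which the first alternative holds, pick $a_{i} \in A$ with $d(x_{i}, a_{i}) < 2\varepsilon/3$. Any $a \in A$ satisfies $d(a, x_{i}) < \varepsilon/3$ for some $i$, and for that $i$ the second alternative is excluded, so $a_{i}$ is defined and $d(a, a_{i}) < \varepsilon$. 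Thus the finite family $\{a_{i}\}$ is an $\varepsilon$-approximation to $A$, so $A$ is totally bounded.

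For~(iii), since $A$ is bounded, local compactness (respectively local total boundedness) of $X$ gives a compact (respectively totally bounded) subset $K$ of $X$ with $A \subseteq K$; in particular $K$ is totally bounded. Locatedness of $A$ in $X$ is inherited by the subspace $K$, so by the argument used for~(ii), $A$ is a located subset of the totally bounded $K$, and hence totally bounded. One last appeal to~(i) finishes the proof.

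The only genuinely delicate step is the locatedness argument in~(ii): the threshold $2\varepsilon/3$ versus $\varepsilon/3$ must be chosen so that the dichotomy furnished by the constructive trichotomy of reals is compatible both with picking the $a_{i}$'s and with the covering of an arbitrary $a \in A$ by the $x_{i}$'s. Everything else is a straightforward assembly of the Bishop--Bridges results on totally bounded sets, the existence of suprema and infima of totally bounded subsets of $\Real$, and the Lipschitz calculus of Section~\ref{S:basic}.
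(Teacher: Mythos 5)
Your proof is correct and follows essentially the same route as the paper: reduce (ii) and (iii) to (i) via the fact that a located subset of a totally bounded space is totally bounded, and obtain (i) from the existence of the supremum and infimum of a uniformly continuous function on a totally bounded set. The only difference is that where the paper simply cites Bishop--Bridges (p.~95) for that locatedness lemma, you prove it directly with the $\varepsilon/3$ versus $2\varepsilon/3$ cotransitivity argument, which is exactly the standard proof of the cited result.
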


\begin{proof}
 (i) If $\sigma > 0$, $g \in \Lip(A)$ and $x \in X$, then $g + \sigma d_{x}, g - \sigma d_x
 \in \Lip(A) \subseteq C_u(A)$, hence $g^{*}(x)$ and ${^{*}}g(x)$ are well defined,
 since $A$ is totally bounded (see~\cite{BB85}, Corollary 4.3, p.94).\\
 (ii) A located subset of $X$ is also totally bounded (see~\cite{BB85}, p.95), and
 we use (i).\\
 (iii) If $A$ is bounded and located, 
 there is compact (totally bounded) $K \subseteq X$ with $A \subseteq K$. 
 Since $A$ is located in $X$, it is located in $K$, hence $A$ is totally bounded, and we use (i).\\
\end{proof}

\begin{proposition}\label{prp: infsup2}
 Let $(X, A)$ be an MW-pair and $g \in \Lip(A, \sigma)$. The following hold.\\[1mm]
$(i)$ The set $A$ is located.\\[1mm]
$(ii)$ If $M \in \Real$ such that $\forall_{a \in A}(g(a) \leq M)$, then  $\forall_{x \in X}(g^*(x) \leq M)$.\\[1mm]
$(iii)$ If $m \in \Real$ such that $\forall_{a \in A}(g(a) \geq m)$, then  $\forall_{x \in X}({^{*}}g(x) \geq m)$.\\[1mm]
$(iv)$ If $\inf g$ and $\sup g$ exist, then $\inf {^{*}g}$, $\sup g^{*}$ exist and the following equalities hold
 $$\inf_{x \in X} {^{*}g} = \inf_{a \in A} g \ \ \& \ \ \sup_{x \in X} {g^{*}} = \sup_{a \in A} g.$$
\end{proposition}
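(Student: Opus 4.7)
The plan is to derive each part directly from the definitions of $g^{*}$ and ${^{*}}g$. For (i), I would instantiate the MW-pair hypothesis with $g_0 := \overline{0}_A \in \Lip(A, 1)$; a direct unfolding gives
\[
{^{*}}g_0(x) \;=\; \inf\{\,0 + 1 \cdot d(x, a) \mid a \in A\,\} \;=\; \inf \Delta(x, A),
\]
and since this quantity exists for every $x \in X$ by the MW-pair property with $\sigma = 1$, we conclude that $d(x, A)$ exists for every $x \in X$, which is precisely the statement that $A$ is located.

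Parts (ii) and (iii) should follow immediately from the defining $\sup$/$\inf$ axioms. Because $\sigma d(x, a) \geq 0$, every term $g(a) - \sigma d(x, a)$ appearing in the definition of $g^{*}(x)$ satisfies $g(a) - \sigma d(x, a) \leq g(a) \leq M$, so $M$ is an upper bound of the defining set and hence $g^{*}(x) \leq M$. Part (iii) is symmetric: each term $g(a) + \sigma d(x, a) \geq g(a) \geq m$, so $m$ is a lower bound of the defining set of ${^{*}}g(x)$.

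For (iv), I would combine (ii) and (iii) with Theorem~\ref{thm: MW}(ii), which guarantees ${g^{*}}_{|A} = ({^{*}}g)_{|A} = g$. Setting $M := \sup_{a \in A} g$, part (ii) gives that $M$ is an upper bound of $g^{*}(X)$. For the approximation condition, given $\epsilon > 0$ choose $a \in A$ with $g(a) > M - \epsilon$, available from the existence of $\sup g$; then $g^{*}(a) = g(a) > M - \epsilon$ by the extension property. Hence $\sup_{x \in X} g^{*}$ exists and equals $M$. The dual argument, using (iii) and $({^{*}}g)_{|A} = g$, yields $\inf_{x \in X} {^{*}}g = \inf_{a \in A} g$. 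No serious obstacle is expected; constructively, the main point of care is verifying both the upper/lower bound and the $\epsilon$-approximation condition for each $\sup$/$\inf$ claim in (iv), which are supplied respectively by (ii)/(iii) and by the hypothesis that $\sup g$ and $\inf g$ exist on $A$.
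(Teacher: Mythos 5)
Your proposal is correct, and for parts (i)--(iii) it is essentially the paper's argument: (i) instantiates the MW-pair hypothesis at a constant function (the paper uses $\overline{r}_{A}$ for arbitrary $r$ and $\sigma$ and then factors the affine map out of the infimum; your choice $r = 0$, $\sigma = 1$ gives $d(x,A) = {^{*}}\overline{0}_{A}(x)$ with no extra manipulation), and (ii)--(iii) are the same one-line bound-transfer, using that an upper (lower) bound of the defining set bounds its $\sup$ ($\inf$) -- which is constructively valid via the $\epsilon$-approximation clause in Definition~\ref{def: real}.

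Where you genuinely diverge is (iv). The paper does \emph{not} simply cite (ii)/(iii): it verifies the approximation clause via the extension property exactly as you do, but for the bound clause $m \leq {^{*}}g(X)$ it argues by cases -- $x \in A$, $x$ in the metric complement $-A := \{x \in X \mid d(x,A) > 0\}$ -- and then extends to all of $X$ using the locatedness of $A$ from (i), the density of $A \cup (-A)$ in $X$, and the uniform continuity of ${^{*}}g$ along an approximating sequence. Your route, applying (iii) with $m := \inf_{a \in A} g$ (which is a lower bound of $g$ on $A$ by the very definition of $\inf$) and dually (ii) with $M := \sup_{a \in A} g$, delivers the bound clause for every $x \in X$ directly, since $g(a) + \sigma d(x,a) \geq g(a) \geq m$ needs no information about where $x$ sits relative to $A$. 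This is a genuine simplification: it eliminates the detour through $-A$, density, and continuity, and makes (iv) an immediate corollary of (ii), (iii), and Theorem~\ref{thm: MW}(ii). Nothing is lost; the only care needed, which you correctly flag, is to check both defining clauses of $\sup$/$\inf$ separately, and you supply each.
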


\begin{proof}(i) Let $r \in \Real$ and $\sigma > 0$. Since $\overline{r}_{A} \in \Lip(A, \sigma)$, by hypothesis 
${^{*}}\overline{r}_{A}$ is well-defined, where ${^{*}}\overline{r}_{A}(x) = \inf \{r + \sigma d(x, a) 
\mid a \in A\}$, for every $x \in X$.
If $x \in X$ and $a \in A$, then $d(x, a) = \frac{1}{\sigma}(r + \sigma d(x, a) - r)$, 
and $\Delta(x, A) = \{\frac{1}{\sigma}(r + \sigma d(x, a) - r) \mid a \in A\}$.
Hence
\begin{align*}
d(x, A) & = \inf \{\frac{1}{\sigma}(r + \sigma d(x, a) - r) \mid a \in A\} \\
& = \frac{1}{\sigma} ( \inf \{r + \sigma d(x, a) \mid a \in A\} - r) \\
& = \frac{1}{\sigma}({^{*}}\overline{r}_{A}(x)  - r).
\end{align*}
(ii) Let $x \in X$. Since $g(a) \leq M$, $g(a) - \sigma d(x, a) \leq M - \sigma d(x, a) \leq M$. Hence
$g^*(x) \leq M$. \\
(iii) Let $x \in X$. Since $g(a) \geq M$, $g(a) + \sigma d(x, a) \geq M$. Hence
${^{*}}g \leq M$. \\
(iv) We show that $m := \inf \{g(a) \mid a \in A\}$ satisfies the properties of $\inf \{{^{*}}g(x) \mid x \in X\}$.
 It suffices to show that $m \leq {^{*}}g(X)$, since the other definitional condition of $\inf$ follows
 immediately;
 if $\epsilon > 0$, then there exists $a \in A \subseteq X$ such that $g(a) = {^{*}}g(a) < m + \epsilon$.
 If $x \in A$, then $m \leq g(x) = {^{*}}g(x)$, since $m = \inf g$. Since $A$ is located, the set 
 $$-A := \{x \in X \mid d(x, A) > 0\}$$
 is well-defined. If $x \in -A$, then $d(x, a) \geq d(x, A) > 0$, 
 for every $a \in A$. Hence
 \begin{align*}
  & g(a) + \sigma d(x, a)  > g(a) \geq \inf_{a \in A} g \\
  & \Rightarrow 
   \inf_{a \in A}(g(a) + \sigma d(x, a)) \geq \inf_{a \in A} g\\
   & \Leftrightarrow 
   {^{*}g}(x) \geq m.
 \end{align*}
Since $A$ is located, the set $A \cup (-A)$ is dense in $X$ (see~\cite{BB85}, p.88). If $x \in X$, there
is some sequence
$(d_{n})_{n \in \Nat} \subseteq A \cup (-A)$ such that $d_{n} \stackrel{n} \rightarrow x$. By the 
continuity of
${^{*}}g$ we have that ${^{*}}g(d_{n}) \stackrel{n} \rightarrow {^{*}}g(x)$. Suppose that ${^{*}}g(x) < m$.
Since 
${^{*}}g(d_{n}) \geq m$, for every $n \in \Nat$, if
$\epsilon := (m - {^{*}}g(x)) > 0$, there is some $n_{0}$ such that for every $n \geq n_{0}$ we have that 
$$|{^{*}}g(d_{n}) - {^{*}}g(x)| = {^{*}}g(d_{n}) - {^{*}}g(x) < m - {^{*}}g(x) \Leftrightarrow
{^{*}}g(d_{n}) < m,$$
 which is a contradiction. Hence ${^{*}}g(x) \geq m$.
For the existence of $\sup g^{*}$ and the equality $\sup_{x \in X} {g^{*}} = \sup_{a \in A} g$ we work
similarly. 
\end{proof}

If $X$ is a normed space and $x_{0} \in X$, then it is not generally the case that the set
$\Real x_{0} := \{\lambda x_{0} \mid \lambda \in \Real\}$ is a located subset of $X$. If $X = \Real$, the 
locatedness of $\Real x_0$, for every $x_0 \in \Real$, is 
equivalent to LPO (see~\cite{BB85}, p.~122). 
If for every $x_{0} \in \Real$ we have that $(\Real, \Real x_{0})$ is a McShane-Whitney pair,
then by Proposition~\ref{prp: infsup2}(i) every set of the form $\Real x_{0}$ is located, which implies LPO.

The next proposition expresses the ``step-invariance'' of the McShane-Whitney extension. If 
$A \subseteq B \subseteq X$ such that $(X, A), (X, B)$ and $(B, A)$ are  McShane-Whitney pairs and $g \in \Lip(A)$,
then $g^{*_{X}}$ is the $(A${-}$X)$ extension of $g$, $g^{*_{B}*_{X}}$ is the $(B${-}$X)$ extension of the
$(A${-}$B)$ extension $g^{*_{B}}$ of $g$, and similarly for $^{*_{X}}g$ and ${^{*_{X}*_{B}}}g$.

\begin{proposition}\label{prp: stepmk}
 If $A \subseteq B \subseteq X$ such that $(X, A), (X, B)$, $(B, A)$ are 
 MW-pairs and $g \in \Lip(A, \sigma)$, for some $\sigma > 0$, then
 $$g^{*_{X}} = g^{*_{B}*_{X}}, \ \ \ ^{*_{X}}g = {^{*_{X}*_{B}}}g.$$
\end{proposition}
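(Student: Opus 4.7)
The plan is to prove both equalities directly from the definitions of $g^{*_{B}}$, $g^{*_{X}}$ and $g^{*_{B}*_{X}}$ in Definition~\ref{def: MW}, using only Theorem~\ref{thm: MW}(ii) (that $g^{*_{B}}$ extends $g$ on $A$) together with the triangle inequality for $d$. As remarked after Definition~\ref{def: MW}, the identity ${^{*}}g = -[(-g)^{*}]$ relates the two McShane-Whitney extensions, and since $-g \in \Lip(A, \sigma)$ whenever $g \in \Lip(A, \sigma)$, the second equality ${^{*_{X}}}g = {^{*_{X}*_{B}}}g$ reduces to the first one applied to $-g$, followed by negation of both sides. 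Accordingly, I would carry out the proof only for the identity $g^{*_{X}} = g^{*_{B}*_{X}}$.

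Fix $x \in X$. For the inequality $g^{*_{X}}(x) \leq g^{*_{B}*_{X}}(x)$, I would exploit that $A \subseteq B$ and that, by Theorem~\ref{thm: MW}(ii) applied to the MW-pair $(B, A)$, we have $g^{*_{B}}(a) = g(a)$ for every $a \in A$. Therefore every supremand $g(a) - \sigma d(x, a)$ that appears in the set defining $g^{*_{X}}(x)$ equals $g^{*_{B}}(a) - \sigma d(x, a)$ with $a \in B$, and is thus dominated by $g^{*_{B}*_{X}}(x)$. Taking the supremum over $a \in A$ delivers the desired bound.

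For the reverse inequality $g^{*_{B}*_{X}}(x) \leq g^{*_{X}}(x)$, fix $b \in B$. For each $a \in A$, the triangle inequality $d(x, a) \leq d(x, b) + d(b, a)$ yields
\[
g(a) - \sigma d(b, a) - \sigma d(x, b) \leq g(a) - \sigma d(x, a) \leq g^{*_{X}}(x).
\]
Taking the supremum over $a \in A$ produces $g^{*_{B}}(b) - \sigma d(x, b) \leq g^{*_{X}}(x)$, and then taking the supremum over $b \in B$ yields $g^{*_{B}*_{X}}(x) \leq g^{*_{X}}(x)$. All three suprema in play exist precisely because $(B, A)$, $(X, B)$ and $(X, A)$ are McShane-Whitney pairs, so no additional constructive principle about completeness of $\Real$ is needed. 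I do not anticipate a real obstacle here; the only subtlety is to keep the three extensions cleanly separated in notation and to verify that the symmetry reduction via ${^{*}}g = -[(-g)^{*}]$ is done correctly when transferring the result from $g^{*_{X}} = g^{*_{B}*_{X}}$ to ${^{*_{X}}}g = {^{*_{X}*_{B}}}g$.
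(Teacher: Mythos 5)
Your proof is correct, but it follows a genuinely different route from the paper's. You argue pointwise from the definitions: one inequality comes from $A \subseteq B$ together with $g^{*_{B}}|_{A} = g$, and the reverse one from the triangle inequality $d(x,a) \leq d(x,b) + d(b,a)$ followed by two successive suprema; the second identity is then obtained from the first via the duality ${^{*}}g = -[(-g)^{*}]$. The paper instead argues order-theoretically, using the extremality clause of Theorem~\ref{thm: MW}(iii) three times: since $g^{*_{B}*_{X}}$ is a $\sigma$-Lipschitz extension of $g$ to $X$, it is squeezed between $g^{*_{X}}$ and ${^{*_{X}}}g$; restricting to $B$ and applying (iii) for the pair $(B,A)$ yields the intermediate fact $(g^{*_{X}})_{|B} = g^{*_{B}}$, whence (iii) for the pair $(X,B)$ gives the reverse inequality. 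Your computation is more elementary and self-contained (it never needs the minimality of $g^{*}$ among extensions, only that $g^{*_{B}}$ extends $g$ and that suprema respect upper bounds and additive constants, all of which are constructively unproblematic since the relevant suprema exist by the MW-pair hypotheses); the paper's argument is shorter and yields the restriction identity $(g^{*_{X}})_{|B} = g^{*_{B}}$ as an explicit byproduct. Your reduction of the second equality to the first via $-g$ is also a small economy over the paper's ``we work similarly'', and it is legitimate because $-g \in \Lip(A,\sigma)$ and the identities ${^{*_{X}}}g = -(-g)^{*_{X}}$ and ${^{*_{X}*_{B}}}g = -\bigl((-g)^{*_{B}*_{X}}\bigr)$ hold whenever the relevant infima and suprema exist, which they do for MW-pairs.
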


\begin{proof} We show only the first equality and for the second we work similarly. By definition
$g^{*_{B}}: B \rightarrow \Real \in \Lip(B, \sigma)$ and $g^{*_{B}}(b) = \sup\{g(a) - \sigma d(b, a) 
\mid a \in A\}$, for every
$b \in B$. Moreover, $g^{*_{B}*_{X}}: X \rightarrow \Real \in \Lip(X, \sigma)$ and 
$$g^{*_{B}*_{X}}(x) = \sup\{g^{*_{B}}(b) - \sigma d(x, b) \mid b \in B\},$$
 for every $x \in X$. 
For the $(A${-}$X)$ extension of
$g$ we have that $g^{*_{X}}: X \rightarrow \Real \in \Lip(X, \sigma)$ and 
$g^{*_{X}}(x) = \sup\{g(a) - \sigma d(x, a) \mid a \in A\}$, for every $x \in X$. 
Since $(g^{*_{B}*_{X}})_{|B} = g^{*_{B}}$, we have that  $(g^{*_{B}*_{X}})_{|A} = (g^{*_{B}})_{|A} = g$.
Therefore
$g^{*_{X}} \leq g^{*_{B}*_{X}} \leq {^{*_{X}}}g,$ and 
$(g^{*_{X}})_{|B} \leq (g^{*_{B}*_{X}})_{|B} = g^{*_{B}} \leq ({^{*_{X}}}g)_{|B}.$ 
Since $(g^{*_{X}})_{|A} = g$, we get that $((g^{*_{X}})_{|B})_{|A} = g$, therefore
$g^{*_{B}} \leq ((g^{*_{X}})_{|B}) \leq {^{*_{B}}}g.$
Since $(g^{*_{X}})_{|B} \leq g^{*_{B}}$ and $g^{*_{B}} \leq (g^{*_{X}})_{|B}$, we get
$(g^{*_{X}})_{|B} = g^{*_{B}}$. Hence
$g^{*_{B}*_{X}} \leq g^{*_{X}} \leq {^{*_{X}*_{B}}}g$
i.e., we have shown both $g^{*_{X}} \leq g^{*_{B}*_{X}}$ and $g^{*_{B}*_{X}} \leq g^{*_{X}}$.
\end{proof}

\begin{proposition}\label{prp: sameconstant}
If $(X, A)$ is an MW-pair and $g \in \Lip(A)$ such that $L(g)$ exists, the following hold.\\[1mm]
$(i)$ $g \in \Lip(A, L(g))$.\\[1mm]
$(ii)$ If $f$ is an $L(g)$-Lipschitz extension of $g$, then $L(f)$ exists and $L(f) = L(g)$.\\[1mm]
$(iii)$ $L({^{*}}g), L(g^{*})$ exist and  $L({^{*}}g) =  L(g) = L(g^{*}).$
\end{proposition}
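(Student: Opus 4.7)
The plan is to derive the three claims in order: (i) is an immediate unpacking of a remark already made in the paper, (ii) is a direct verification from the definition of $\sup$, and (iii) follows by applying (ii) to the McShane-Whitney extensions produced by Theorem~\ref{thm: MW}.

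For (i), the existence of $L(g) = \sup M_{0}(g)$ forces $L(g) \geq M_{0}(g)$, so Proposition~\ref{prp: m} gives $L(g) \in \Lambda(g)$, that is, $g \in \Lip(A, L(g))$. This is precisely the observation already recorded just before Proposition~\ref{prp: beforeLBP1}, so no new argument is required.

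For (ii), given an $L(g)$-Lipschitz extension $f$ of $g$, I would verify directly that $L(g)$ meets both defining conditions of $\sup M_{0}(f)$. The upper bound condition $L(g) \geq M_{0}(f)$ follows from $f \in \Lip(X, L(g))$ via Proposition~\ref{prp: m}. For the approximation condition, fix $\epsilon > 0$ and use $L(g) = \sup M_{0}(g)$ to produce $(a, b) \in A_{0}$ with $\sigma_{a, b}(g) > L(g) - \epsilon$; since $A_{0} \subseteq X_{0}$ and $f_{|A} = g$, we have $\sigma_{a, b}(f) = \sigma_{a, b}(g) \in M_{0}(f)$, so the witness transfers verbatim. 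Hence $L(f)$ exists and equals $L(g)$.

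For (iii), Theorem~\ref{thm: MW} shows that $g^{*}, {^{*}}g \in \Lip(X, \sigma)$ and restrict to $g$ whenever $g \in \Lip(A, \sigma)$. Taking $\sigma = L(g)$, which is legitimate by (i), we find that both $g^{*}$ and ${^{*}}g$ are $L(g)$-Lipschitz extensions of $g$. Applying (ii) to each yields the existence of $L(g^{*})$ and $L({^{*}}g)$ together with the equalities $L(g^{*}) = L(g) = L({^{*}}g)$. The proposition is essentially organizational; the only point where some constructive care is needed is the witness transfer in (ii), but the identification $\sigma_{a, b}(f) = \sigma_{a, b}(g)$ on $A_{0}$ makes this automatic, so I do not expect any substantive obstacle.
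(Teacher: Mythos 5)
Your proposal is correct and follows essentially the same route as the paper: (i) by the remark preceding Proposition~\ref{prp: beforeLBP1}, (ii) by checking both defining conditions of $\sup M_{0}(f)$ with the witness $(a,b) \in A_{0} \subseteq X_{0}$ transferred via $\sigma_{a,b}(f) = \sigma_{a,b}(g)$, and (iii) by noting that $g^{*}, {^{*}}g \in \Lip(X, L(g))$ extend $g$ and invoking (ii). No gaps.
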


\begin{proof} 
(i) See our comment before the Proposition~\ref{prp: beforeLBP1}.\\
(ii) Since $f \in \Lip(X, L(g))$, we get $L(g) \in \Lambda(f)$ and  
$L(g) \geq M_{0}(f)$. Let $\epsilon > 0$.
Since $L(g) = \sup M_{0}(g)$, there exists 
$(a, b) \in A_{0} \subseteq X_{0}$
such that $\sigma_{a, b}(g) > L(g) - \epsilon$. Since $f$ extends $g$,
$\sigma_{a, b}(g) = \sigma_{a, b}(f)$.\\
(iii) By definition ${^{*}}g, g^{*} \in \Lip(X, L(g))$ and they extend $g$. Hence we use (ii).
\end{proof}

\begin{proposition}\label{prp: operations}
 Let $(X, A)$ be an MW-pair, $g_{1} \in \Lip(A, \sigma_{1}), g_{2} \in \Lip(A, \sigma_{2})$ and
 $g \in \Lip(A, \sigma)$, for some $\sigma_{1}, \sigma_{2}, \sigma > 0$. The following hold.\\[1mm]
 $(i)$ $(g_{1} + g_{2})^{*} \leq g_{1}^{*} + g_{2}^{*}$ and ${^{*}}(g_{1} + g_{2}) \geq {^{*}}g_{1} 
 + {^{*}}g_{2}$.\\[1mm]
$(ii)$ If $\lambda > 0$, then $(\lambda g)^{*} = \lambda g^{*}$ and ${^{*}}(\lambda g) = \lambda {^{*}}g$.\\[1mm]
$(iii)$ If $\lambda < 0$, then $(\lambda g)^{*} = \lambda {^{*}g}$ and ${^{*}}(\lambda g) = \lambda g^{*}$.
\end{proposition}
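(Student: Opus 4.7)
The plan is to reduce each claim to a routine manipulation of the defining formulas for $g^{*}$ and ${^{*}}g$, together with standard facts about $\sup$ and $\inf$ under sums and scalings. A preliminary remark is that the extensions appearing in the statements are all well-defined: $g_1 + g_2 \in \Lip(A, \sigma_1 + \sigma_2)$ and $\lambda g \in \Lip(A, |\lambda|\sigma)$, so since $(X, A)$ is an MW-pair, the functions $(g_1 + g_2)^{*}, {^{*}}(g_1 + g_2), (\lambda g)^{*}, {^{*}}(\lambda g)$ all exist in $\Lip(X, \,\cdot\,)$ with the corresponding constants.

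For part (i), I would fix $x \in X$ and, for each $a \in A$, write
\[g_1(a) + g_2(a) - (\sigma_1 + \sigma_2) d(x, a) = \bigl(g_1(a) - \sigma_1 d(x, a)\bigr) + \bigl(g_2(a) - \sigma_2 d(x, a)\bigr) \leq g_1^{*}(x) + g_2^{*}(x),\]
since each summand is bounded above by the corresponding supremum. Taking the least upper bound over $a \in A$ of the left-hand side gives $(g_1 + g_2)^{*}(x) \leq g_1^{*}(x) + g_2^{*}(x)$. The inequality ${^{*}}(g_1 + g_2) \geq {^{*}}g_1 + {^{*}}g_2$ follows by the dual argument, using that each of $g_1(a) + \sigma_1 d(x, a)$ and $g_2(a) + \sigma_2 d(x, a)$ dominates ${^{*}}g_1(x)$ and ${^{*}}g_2(x)$, respectively, so their sum is a lower bound for the infimum defining ${^{*}}(g_1 + g_2)(x)$.

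For part (ii), if $\lambda > 0$, then for each $a \in A$ the scaling $\lambda g(a) - (\lambda \sigma) d(x, a) = \lambda\bigl(g(a) - \sigma d(x, a)\bigr)$ shows that the set defining $(\lambda g)^{*}(x)$ is the image under multiplication by $\lambda$ of the set defining $g^{*}(x)$. Since $\lambda > 0$, $\sup$ commutes with multiplication by $\lambda$, giving $(\lambda g)^{*} = \lambda g^{*}$; the equation ${^{*}}(\lambda g) = \lambda {^{*}}g$ is analogous.

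Part (iii) is the only step requiring any care: for $\lambda < 0$ we have $\lambda g \in \Lip(A, -\lambda \sigma)$, and
\[\lambda g(a) - (-\lambda \sigma) d(x, a) = \lambda\bigl(g(a) + \sigma d(x, a)\bigr),\]
so the set defining $(\lambda g)^{*}(x)$ is $\lambda$ times the set defining ${^{*}}g(x)$. Since multiplication by a negative scalar swaps $\sup$ and $\inf$, we obtain $(\lambda g)^{*}(x) = \lambda {^{*}}g(x)$, and the symmetric computation yields ${^{*}}(\lambda g) = \lambda g^{*}$. The only potential obstacle is ensuring that in $\BISH$ the identities $\sup(\lambda S) = \lambda \sup S$ for $\lambda > 0$ and $\sup(\lambda S) = \lambda \inf S$ for $\lambda < 0$ hold; these follow directly from the definitional conditions of $\sup$ and $\inf$ recalled in Definition~\ref{def: real}, so no non-constructive principle is invoked.
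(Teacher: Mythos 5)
Your proposal is correct and follows essentially the same route as the paper: the same decomposition of the defining set in (i), the same use of positive homogeneity of $\sup$ and $\inf$ in (ii), and the same observation in (iii) that multiplying by a negative scalar converts the supremum into $\lambda$ times the infimum. Your closing remark that these $\sup$/$\inf$ identities follow from the definitional conditions in Definition~\ref{def: real}, and hence are constructively unproblematic, is a point the paper leaves implicit.
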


\begin{proof}In each case we show only one of the two facts.\\
(i) We have that $g_{1} + g_{2} \in \Lip(A, \sigma_{1} + \sigma_{2})$ and
\begin{align*}
 (g_{1} + g_{2})^{*}(x) & = \sup \{g_{1}(a) + g_{2}(a) - (\sigma_{1} + \sigma_{2})d(x, a) \mid a \in A\} \\
 & = \sup \{(g_{1}(a) - \sigma_{1}d(x, a)) + (g_{2}(a) -  \sigma_{2}d(x, a)) \mid a \in A\} \\
 & \leq \sup \{g_{1}(a) - \sigma_{1}d(x, a) \mid a \in A\} + \sup \{g_{2}(a) -  \sigma_{2}d(x, a) \mid a \in A\}\\
 & = g_{1}^{*}(x) + g_{2}^{*}(x).
\end{align*}
(ii) If $\lambda \in \Real$, then $\lambda g \in \Lip(A, |\lambda|\sigma)$ and if $\lambda > 0$, then
\begin{align*}
 (\lambda g)^{*}(x) & = \sup \{\lambda g(a) - |\lambda|\sigma d(x, a) \mid a \in A\}\\
 & = \lambda \sup \{g(a) - \sigma d(x, a) \mid a \in A\}\\
 & = \lambda g^{*}(x).
\end{align*}
(iii) If $\lambda < 0$, then 
 \begin{align*}
 (\lambda g)^{*}(x) & = \sup \{\lambda g(a) - |\lambda|\sigma d(x, a) \mid a \in A\}\\
 & = \sup \{\lambda g(a) - (- \lambda)\sigma d(x, a) \mid a \in A\}\\
 & = \sup \{\lambda (g(a) + \sigma d(x, a)) \mid a \in A\}\\
 & = \lambda \inf \{g(a) + \sigma d(x, a) \mid a \in A\}\\
 & = \lambda {^{*}}g(x).
   \tag*{\qedhere}
\end{align*}
\end{proof}

\subsection{Local McShane-Whitney pairs}\label{s:local}

Next we define a local notion of an MW-pair. 

\begin{definition}\label{def: locally}The pair
$(X, A)$ is a \textit{local} McShane-Whitney pair, or simply a local MW-pair, if for every bounded
$B \subseteq A$ there is  $A{'} \subseteq A$ such that $B \subseteq A{'}$ and $(X, A{'})$ is a MW-pair.

\end{definition}

Clearly, an MW-pair is a local MW-pair. The next lemma is shown in~\cite{BR87}, p.~33 and in~\cite{BV06},
p.~46. If $A, B \subseteq X$, 
we use Sambin's notation $A \between B$ to denote that the intersection $A \cap B$ is inhabited.

\begin{lemma}\label{lem: BV} If $A$ is a located subset of a metric space $X$ and $T$ is a totally bounded
subset of $X$ such that $T \between A$, there is totally bounded $S \subseteq X$ such that  
$T \cap A \subseteq S \subseteq A$. 
\end{lemma}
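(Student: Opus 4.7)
The plan is to construct $S$ as the union of $T \cap A$ with a countable family of finite subsets of $A$, each obtained from a finite approximation of $T$ by replacing those approximation points that lie near $A$ with actual elements of $A$ via the locatedness of $A$. Concretely, for each $n \in \Nat$ the total boundedness of $T$ provides a finite $\tfrac{1}{4n}$-approximation $F_n \subseteq T$; for each $t \in F_n$ the distance $d(t,A)$ exists by locatedness, and cotransitivity applied to $\tfrac{1}{2n} < \tfrac{1}{n}$ gives an effective disjunction $d(t,A) < \tfrac{1}{n}$ or $d(t,A) > \tfrac{1}{2n}$. Whenever the first branch is chosen, pick $a_t \in A$ with $d(t, a_t) < \tfrac{1}{n}$ (using the standard property of the infimum from Definition~\ref{def: real}) and collect all the selected $a_t$'s into a finite set $S_n \subseteq A$. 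Then set
$$S := (T \cap A) \cup \bigcup_{n \in \Nat} S_n.$$

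The inclusions $T \cap A \subseteq S \subseteq A$ are built in, and the hypothesis $T \between A$ provides an inhabitant of $S$. The principal remaining task is to show $S$ is totally bounded. Given $\varepsilon > 0$ I would pick $N \in \Nat$ with $\tfrac{3}{N} < \varepsilon$ and argue that the finite set $S_1 \cup S_2 \cup \cdots \cup S_{4N}$ is an $\varepsilon$-approximation of $S$. For $x \in T \cap A$ there is $t \in F_N$ with $d(t,x) < \tfrac{1}{4N}$; then $d(t,A) \leq d(t,x) < \tfrac{1}{2N}$ rules out the ``far'' branch of the cotransitivity disjunction for $t$, so the chosen $a_t$ lies in $S_N$ and a triangle estimate yields $d(x, a_t) < \varepsilon$. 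Points $x \in S_m$ with $m \leq 4N$ trivially lie in $S_1 \cup \cdots \cup S_{4N}$. For $x = a_i^m \in S_m$ with $m > 4N$, the associated $t_i^m \in F_m$ is $\tfrac{1}{4N}$-close to some $t_j^N \in F_N$; since $a_i^m \in A$, the triangle inequality gives
$$d(t_j^N, A) \leq d(t_j^N, t_i^m) + d(t_i^m, a_i^m) < \tfrac{1}{4N} + \tfrac{1}{m} < \tfrac{1}{2N},$$
again ruling out the ``far'' branch, so $a_j^N \in S_N$ and a further triangle estimate delivers $d(x, a_j^N) < \varepsilon$.

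The main obstacle is exactly this coordination of three parameters — the approximation mesh $\tfrac{1}{4n}$, the inclusion cutoff $\tfrac{1}{n}$, and the cotransitivity gap $\tfrac{1}{2n}$ — which have to be tuned so that, for $m$ sufficiently larger than $N$, the triangle inequality with an $A$-witness $a_i^m$ \emph{forces} the ``close'' branch of the cotransitivity for the paired $F_N$-point. Constructively the disjunction provided by cotransitivity is effective, so refuting one branch is legitimate evidence that the other was recorded; this is what lets each selected $a_j^N \in S_N$ serve as the $\varepsilon$-approximant for all sufficiently ``deep'' elements $a_i^m \in S_m$. Once this bookkeeping is arranged, the total boundedness of $S$ drops out, completing the proof.
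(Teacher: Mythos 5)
Your proof is correct, and it is essentially the standard argument that the paper itself does not reproduce but attributes to~\cite{BR87}, p.~33 and~\cite{BV06}, p.~46: finite approximations of $T$, the locatedness dichotomy $d(t,A)<\tfrac{1}{n}$ or $d(t,A)>\tfrac{1}{2n}$ to select witnesses in $A$, and a case analysis showing $S_1\cup\cdots\cup S_{4N}$ approximates $S$ (your constants $5/(4N)$ and $3/(2N)$ are indeed below $\varepsilon$). The only point worth making explicit is that $S_N$ is inhabited because $T\between A$ forces the ``close'' branch for some $t\in F_N$, which your first case already supplies.
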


The next proposition is the ``local'' version of Proposition~\ref{prp: examples}(i)-(ii).

\begin{proposition} If $A \subseteq X$, the following hold.\\[1mm]
$(i)$ If $X$ is locally totally bounded and $A$ is located, then $(X, A)$ is
a local MW-pair.\\[1mm]
$(ii)$ If $A$ is a locally totally bounded subset of $X$, then $(X, A)$ is a local MW-pair.
\end{proposition}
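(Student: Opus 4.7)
The plan is to reduce both parts to Proposition~\ref{prp: examples}(i), which already guarantees that $(X, A')$ is an MW-pair whenever $A'$ is a totally bounded subset of $X$. Thus, given a bounded $B \subseteq A$, the whole task in each part is to produce a totally bounded $A' \subseteq A$ containing $B$, and then invoke Definition~\ref{def: locally}.

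For part (ii) the argument is direct. Since $A$ carries the restricted metric, a bounded $B \subseteq A$ is bounded as a subset of the metric space $A$. Local total boundedness of $A$ then supplies a totally bounded $A' \subseteq A$ with $B \subseteq A'$. Total boundedness is intrinsic to the metric of $A'$ (any finite $\epsilon$-approximation in $A'$ serves equally well viewed inside $X$), so $A'$ is also a totally bounded subset of $X$, and Proposition~\ref{prp: examples}(i) finishes the job.

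For part (i), I would route through Lemma~\ref{lem: BV}. Given a bounded, inhabited $B \subseteq A$, local total boundedness of $X$ first provides a totally bounded $T \subseteq X$ with $B \subseteq T$. Because $B \subseteq T \cap A$ and $B$ is inhabited, $T \between A$, so the locatedness of $A$ allows Lemma~\ref{lem: BV} to yield a totally bounded $S \subseteq X$ with $T \cap A \subseteq S \subseteq A$. Setting $A' := S$, we have $B \subseteq A' \subseteq A$ with $A'$ totally bounded in $X$, and Proposition~\ref{prp: examples}(i) applies. The edge case of an empty $B$ is harmless: if $A$ is inhabited pick any singleton $\{a_0\} \subseteq A$ for $A'$, and if $A$ itself is empty the statement is vacuous.

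I expect the only delicate step to be the invocation of Lemma~\ref{lem: BV} in part (i), since it is the genuinely constructive ingredient that bridges a totally bounded neighborhood in the ambient space to a totally bounded subset of the located set $A$. Everything else is bookkeeping about inherited metrics and the fact that boundedness and total boundedness of subsets are intrinsic, not ambient, properties.
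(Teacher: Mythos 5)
Your proposal is correct and follows essentially the same route as the paper: both parts reduce to Proposition~\ref{prp: examples}(i) by producing a totally bounded subset of $A$ containing the given bounded $B$, with Lemma~\ref{lem: BV} supplying that subset in part (i). The only superfluous step is your discussion of empty $B$: in the Bishop-style setting a bounded set is inhabited by definition, which is exactly how the paper justifies $T \between A$.
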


\begin{proof} 
(i) Let $B \subseteq A$ bounded. By the definition of local total boundedness there is a totally bounded 
$T \subseteq X$ such that $B \subseteq T$. Since $B \subseteq T \cap A$, and since by the definition of
a bounded set, $B$ is inhabited, we get $A \between T$. By Lemma~\ref{lem: BV} there is a totally bounded
$S \subseteq X$ such that 
$B \subseteq T \cap A \subseteq S \subseteq A$. Since $S$ is totally bounded, by Proposition~\ref{prp: examples}(i)
we have that $(X, S)$ is an MW-pair.\\
(ii) Let $B \subseteq A$ be bounded. By the definition of local total boundedness there is a totally bounded
$T \subseteq A$ such that $B \subseteq T$. As in case (i), we get that $(X, T)$ is an MW-pair.
\end{proof}

\subsection{H\"{o}lder continuous functions of order $\alpha$ and $\nu$-continuous functions}\label{s:hoelder}

As expected, the McShane-Whitney extension works for the standard generalizations of Lipschitz functions.

\begin{definition}\label{def: hoelder}
If $\sigma \geq 0$ and $\alpha \in (0, 1]$, the \textit{H\"{o}lder continuous functions of 
order $\alpha$} are defined as follows:
$$\Hoel(X, Y, \alpha, \sigma) := \{f \in \mathbb{F}(X, Y) \mid \forall_{x, y \in X}(\rho(f(x), f(y)) 
\leq \sigma d(x, y)^{\alpha})\},$$
$$\Hoel(X, Y, \alpha) := \bigcup _{\sigma \geq 0} \Hoel(X, Y, \alpha, \sigma).$$
If $Y = \Real$ is equipped with its standard metric, we write $\Hoel(X, \alpha, \sigma)$ and $\Hoel(X, \alpha)$,
respectively.
\end{definition}


Clearly, $\Hoel(X, Y, 1, \sigma) = \Lip(X, Y, \sigma)$ and $\Hoel(X, Y, 1) = \Lip(X, Y)$.
One can define MW-pairs with respect to H\"{o}lder continuous functions of order $\alpha$, and show 
a McShane-Whitney extension theorem for them. We give some more details of these two steps only 
for the following generalization of H\"{o}lder continuity. The next definition is a reformulation of the 
definition found in the formulation of Theorem 5.6 in~\cite{BB85}, p.~102.


\begin{definition}\label{def: lambda} A \textit{modulus of continuity} is a function 
$\nu : [0, + \infty) \rightarrow [0, + \infty)$ satisfying the following conditions:\\[1mm]
(i) $\nu(0) = 0$.\\[1mm]
(ii) $\forall_{x, y \in [0, + \infty)}\big(\nu(x + y) \leq \nu(x) + \nu(y)\big)$.\\[1mm]
(iii) It is strictly increasing i.e., $\forall_{s, t \in [0, + \infty)}\big(s < t \Rightarrow \nu(s) < \nu(t)\big)$.\\[1mm]
(iv) It is uniformly continuous on every bounded subset of $[0, + \infty)$.\\[1mm]
If $\nu$ is a modulus of continuity the set of $\nu$-\textit{continuous} functions from $X$ to $Y$, and from 
$X$ to $\Real$, are defined, respectively, as follows:
$$S(X, Y, \nu) := \{f \in C_u(X, Y) \mid \forall_{x, y \in X}(\rho(f(x), f(y))  \leq \nu(d(x, y))\},$$
$$S(X, \nu) := \{f \in C_u(X) \mid \forall_{x, y \in X}(|f(x) - f(y)|  \leq \nu(d(x, y))\},$$
\end{definition}
Clearly, if $\nu_{1}(t) = \sigma t$, $\nu_{2}(t) = \sigma t^{\alpha}$, for every $t \in [0, + \infty)$, 
then 
$$S(X, \nu_{1}) = \Lip(X, \sigma),$$ 
$$S(X, \nu_{2}) = \Hoel(X, \alpha, \sigma).$$
Notice that since $a \in (0, 1]$, we have that $\nu_{2}$ is concave, hence subadditive. 
It is immediate to show that if $\nu$ is a modulus of continuity, then
$$\forall_{s, t \in [0, + \infty)}\big(s \leq t \Rightarrow \nu(s) \leq \nu(t)\big).$$

%

\begin{definition}\label{def: lMWpairs}
Let $A$ be a subset of $X$. We call $(X, A)$ a $\nu$\textit{-McShane-Whitney pair}, 
or simply a $\nu${-}MW-pair, if for every
$g \in S(A, \nu)$ the functions $g^{*}_{\nu}, {^{*}}g_{\nu} : X \rightarrow \Real$ are well-defined, 
where for every $x \in X$
$$g^{*}_{\nu}(x) := \sup \{g(a) - \nu(d(x, a)) \mid a \in A\},$$
$${^{*}}g_{\nu}(x) := \inf \{g(a) + \nu(d(x, a)) \mid a \in A\}.$$
\end{definition}


In order to get examples of $\nu${-}MW-pairs, one proceeds as in Proposition~\ref{prp: examples}(i)-(iii). If, for example,
$A$ is totally bounded, then, since $d_{x}$ is uniformly continuous, $d_{x}(A)$ is 
a bounded subset of $[0, + \infty)$ and $\nu \circ d_{x} = \nu_{|d_{x}(A)} \circ d_{x}$ 
is a uniformly continuous function as a composition of uniformly continuous functions. Hence 
${^{*}g}_{\nu}(x)$ and $g^{*}_{\nu}(x)$ are well-defined.

\begin{theorem}[McShane-Whitney theorem for $\nu$-continuous functions]\label{thm: lMW} If $(X, A)$ is
a $\nu${-}MW-pair and $g \in S(A, \nu)$, the following hold.\\[1mm]
$(i)$ $g^{*}_{\nu}, {^{*}}g_{\nu} \in S(X, \lambda)$.\\[1mm]
$(ii)$ ${g^{*}_{\nu}}_{|A} = ({^{*}}g_{\nu})_{|A} = g$.\\[1mm]
$(iii)$ $\forall_{f \in S(X, \nu)}(f_{|A} = g \Rightarrow g^{*}_{\nu} \leq f \leq {^{*}}g_{\nu})$.\\[1mm]
$(iv)$ The pair $(g^{*}_{\nu}$, ${^{*}}g_{\nu})$ is the unique pair of functions satisfying conditions $(i)$-$(iii)$. 
\end{theorem}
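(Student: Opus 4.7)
The plan is to transplant the proof of Theorem~\ref{thm: MW} to this setting, replacing the ``linear'' Lipschitz penalty $\sigma d(x, a)$ by $\nu(d(x, a))$. The arithmetic step in the original proof that exploits the linearity of $t \mapsto \sigma t$ is replaced here by two properties of $\nu$: subadditivity (clause (ii) of Definition~\ref{def: lambda}) and monotonicity (which, as noted before Definition~\ref{def: lMWpairs}, follows from strict increase, clause (iii)). In combination these provide the analogue of the trivial estimate $\sigma(u + v) \leq \sigma u + \sigma v$ that carried the argument in the Lipschitz case.

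For (i), fix $x_1, x_2 \in X$ and $a \in A$. The metric triangle inequality $d(x_1, a) \leq d(x_2, a) + d(x_1, x_2)$ together with monotonicity of $\nu$ yields $\nu(d(x_1, a)) \leq \nu(d(x_2, a) + d(x_1, x_2))$, and then subadditivity gives $\nu(d(x_1, a)) \leq \nu(d(x_2, a)) + \nu(d(x_1, x_2))$. Adding $g(a)$ to both sides and taking the infimum over $a \in A$ (which is legitimate since $(X, A)$ is a $\nu$-MW-pair) produces
\[{^{*}}g_{\nu}(x_1) - \nu(d(x_1, x_2)) \leq {^{*}}g_{\nu}(x_2).\]
Interchanging the roles of $x_1$ and $x_2$ gives $|{^{*}}g_{\nu}(x_1) - {^{*}}g_{\nu}(x_2)| \leq \nu(d(x_1, x_2))$, so ${^{*}}g_{\nu} \in S(X, \nu)$. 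The reasoning for $g^{*}_{\nu}$ is symmetric, starting from $g(a) - \nu(d(x, a))$ and using $\sup$ in place of $\inf$.

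For (ii), at any $a_0 \in A$ we use $\nu(0) = 0$ (clause (i) of Definition~\ref{def: lambda}) to get ${^{*}}g_{\nu}(a_0) \leq g(a_0) + \nu(d(a_0, a_0)) = g(a_0)$, while the $\nu$-continuity of $g$ yields $g(a_0) - g(a) \leq \nu(d(a_0, a))$ for every $a \in A$; taking the infimum over $a$ gives $g(a_0) \leq {^{*}}g_{\nu}(a_0)$. The symmetric argument handles $(g^{*}_{\nu})_{|A} = g$. For (iii), let $f \in S(X, \nu)$ with $f_{|A} = g$; then for $x \in X$ and $a \in A$, $f(x) - g(a) = f(x) - f(a) \leq \nu(d(x, a))$, so $f(x) \leq g(a) + \nu(d(x, a))$, and taking the infimum over $a$ yields $f(x) \leq {^{*}}g_{\nu}(x)$. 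The dual inequality $g^{*}_{\nu}(x) \leq f(x)$ follows identically.

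For (iv), uniqueness is obtained by the sandwiching argument already used in the proof of Theorem~\ref{thm: MW}(iv): any candidate pair $(h^{*}, {^{*}}h)$ satisfying (i)-(iii) restricts to $g$ on $A$, so the extremality properties of $(g^{*}_{\nu}, {^{*}}g_{\nu})$ bound it, while the assumed extremality properties of $(h^{*}, {^{*}}h)$ bound $(g^{*}_{\nu}, {^{*}}g_{\nu})$ in the opposite direction, forcing equality. No substantial obstacle is expected; the only delicate bookkeeping is to apply monotonicity before subadditivity whenever an expression of the form $\nu(d(x_1, a) + d(x_1, x_2))$ must be massaged, since subadditivity is phrased for arbitrary nonnegative arguments and cannot be applied directly to an inequality inside the outer $\nu$.
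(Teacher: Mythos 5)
Your proposal is correct and follows exactly the route the paper takes: repeat the proof of Theorem~\ref{thm: MW} with $\sigma d(x,a)$ replaced by $\nu(d(x,a))$, handling the triangle-inequality step by applying monotonicity of $\nu$ first and subadditivity second, which is precisely the one point the paper's own (very terse) proof singles out. No gaps.
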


\begin{proof}
Based on the properties of Definition~\ref{def: lambda}, all steps of the proof of Theorem~\ref{thm: MW}
are repeated for the functions ${^{*}g_{\nu}}, g^{*}_{\nu}$. To show the implication 
$d(x_{1}, a) \leq d(x_{2}, a) + d(x_{1}, x_{2}) \Rightarrow 
\nu(d(x_{1}, a)) \leq \nu(d(x_{2}, a) + d(x_{1}, x_{2})) \leq \nu(d(x_{2}, a)) + \nu(d(x_{1}, x_{2}))$
we use the fact that $\forall_{s, t \in [0, + \infty)}\big(s \leq t \Rightarrow \nu(s) \leq \nu(t)\big).$
\end{proof}

\section{Normed spaces}\label{S:norm}

\subsection{McShane-Whitney pairs in normed spaces}\label{s:normpairs}

Throughout this section $X$ is a normed space with norm $||.||$.

\begin{definition}\label{def: convex} 
 A subset $C$ of $X$ is called \textit{convex}, if 
 $\forall_{x, y \in C}\forall_{\lambda \in (0, 1)}(\lambda x + (1 - \lambda) y \in C)$.
 If $C \subseteq X$ is convex, a function $g: C \rightarrow \Real$ is called \textit{convex}, if
 $\forall_{x, y \in C}\forall_{\lambda \in [0, 1]}(g(\lambda x + (1 - \lambda)y) \leq 
 \lambda g(x) + (1 - \lambda)g(y)),$
 and $g$ is called \textit{concave}, if 
 $\forall_{x, y \in C}\forall_{\lambda \in [0, 1]}(g(\lambda x + (1 - \lambda)y) \geq 
 \lambda g(x) + (1 - \lambda)g(y))$. A function $f: X \rightarrow \Real$ is called \textit{sublinear} if
 it is subadditive and positively homogeneous i.e., if
 $f(x + y) \leq f(x) + f(y)$, and $f(\lambda x) = \lambda f(x)$, for every $x, y \in X$ 
 and $\lambda > 0$, respectively. Similarly, $f$ is called \textit{superlinear}, if it is superadditive 
 i.e., if $f(x + y) \geq f(x) + f(y)$, for every $x, y \in X$, and positively homogeneous.
\end{definition}

\begin{proposition}\label{prp: convex} If $C \subseteq X$ is
 convex, $(X, C)$ is an MW-pair, and $g \in \Lip(C, \sigma)$, the following hold.\\[1mm]
 $(i)$ If $g$ is convex, then ${^{*}}g$ is convex.\\[1mm]
$(ii)$ If $g$ is concave, then $g^{*}$ is concave.
  
\end{proposition}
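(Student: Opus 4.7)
The plan is to exploit two things: the triangle inequality for the norm metric, which is compatible with convex combinations, and the convexity (resp.\ concavity) of $g$, which lets us estimate $g$ at a convex combination of two points of $C$ by the corresponding convex combination of its values. Since $C$ is convex, for any $a,b \in C$ and $\lambda \in [0,1]$ the point $c := \lambda a + (1-\lambda)b$ lies in $C$, so we may test the defining sup/inf of $g^{*}$ and ${^{*}g}$ at $c$.

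For $(i)$, I would fix $x,y \in X$ and $\lambda \in [0,1]$, and for arbitrary $a,b \in C$ set $c := \lambda a + (1-\lambda)b \in C$. The key computation is
\[
 d(\lambda x + (1-\lambda) y, c) = \| \lambda(x - a) + (1-\lambda)(y-b) \| \leq \lambda d(x,a) + (1-\lambda) d(y,b),
\]
so using the convexity of $g$ I obtain
\[
 g(c) + \sigma d(\lambda x + (1-\lambda) y, c) \leq \lambda \bigl( g(a) + \sigma d(x,a) \bigr) + (1-\lambda) \bigl( g(b) + \sigma d(y,b) \bigr).
\]
Since ${^{*}g}(\lambda x + (1-\lambda) y) \leq g(c) + \sigma d(\lambda x + (1-\lambda) y, c)$ by definition of $\inf$, and since $(X,C)$ is an MW-pair so that ${^{*}g}(x), {^{*}g}(y)$ exist, I would then fix $\epsilon > 0$ and, by the definitional property of infimum, choose $a, b \in C$ with $g(a) + \sigma d(x,a) < {^{*}g}(x) + \epsilon$ and $g(b) + \sigma d(y,b) < {^{*}g}(y) + \epsilon$. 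Substituting gives ${^{*}g}(\lambda x + (1-\lambda)y) < \lambda {^{*}g}(x) + (1-\lambda){^{*}g}(y) + \epsilon$, and letting $\epsilon \to 0$ yields convexity of ${^{*}g}$.

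For $(ii)$ I would run the dual argument: with $c := \lambda a + (1-\lambda)b$ as before, concavity of $g$ gives $g(c) \geq \lambda g(a) + (1-\lambda) g(b)$, and negating the triangle-inequality estimate yields
\[
 g(c) - \sigma d(\lambda x + (1-\lambda) y, c) \geq \lambda \bigl( g(a) - \sigma d(x,a) \bigr) + (1-\lambda) \bigl( g(b) - \sigma d(y,b) \bigr).
\]
Since $g^{*}(\lambda x + (1-\lambda)y) \geq g(c) - \sigma d(\lambda x + (1-\lambda)y, c)$, approximating the supremum from below by choosing $a,b \in C$ with $g(a) - \sigma d(x,a) > g^{*}(x) - \epsilon$ and $g(b) - \sigma d(y,b) > g^{*}(y) - \epsilon$, and then letting $\epsilon \to 0$, delivers concavity of $g^{*}$.

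I do not foresee a real obstacle: the argument is a direct combination of the triangle inequality for a norm-induced metric, the hypothesis that $C$ is convex (needed so that $c \in C$), and the constructively unproblematic fact that infima and suprema can be approximated up to arbitrary $\epsilon > 0$ when they exist, which they do here because $(X,C)$ is an MW-pair. The only care required is to use the approximation property of $\sup$/$\inf$ rather than any classical completeness of $\Real$, and to note that the whole argument relies on $d$ being the norm metric (to get the estimate $d(\lambda x + (1-\lambda)y, \lambda a + (1-\lambda) b) \leq \lambda d(x,a) + (1-\lambda) d(y,b)$), which is fine since the proposition is stated in the normed-space section.
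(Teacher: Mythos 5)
Your proposal is correct and follows essentially the same route as the paper: both hinge on the observation that for $a,b \in C$ the point $\lambda a+(1-\lambda)b$ lies in $C$, so convexity of $g$ together with the triangle inequality for the norm bounds ${^{*}}g(\lambda x+(1-\lambda)y)$ by $\lambda(g(a)+\sigma\|x-a\|)+(1-\lambda)(g(b)+\sigma\|y-b\|)$. The only (cosmetic) difference is the last step, where the paper invokes the identity $\inf(C_x+D_y)=\inf C_x+\inf D_y$ for the two relevant sets while you approximate each infimum directly to within $\epsilon$; these amount to the same constructively valid argument.
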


\begin{proof}
We show only (i), and for (ii) we work similarly. Let $x, y \in X$, and $\lambda \in [0, 1]$. If 
$$C_{x} = \{\lambda(g(c) + \sigma ||x-c||) \mid c \in C\},$$ 
$$\ \ \ \ \ \ \ D_{y} = \{(1 - \lambda)(g(c) + \sigma||y - c||) \mid c \in C\},$$
an element of $C_{x} + D_{y}$ has the form $\lambda(g(c) + \sigma ||x-c||) + (1 - \lambda)(g(d) + 
\sigma||y - d||)$, for some
$c, d \in C$. We show that 
$${^{*}}g(\lambda x + (1 - \lambda) y) \leq \lambda(g(c) + \sigma ||x-c||) + (1 - \lambda)(g(d) + 
\sigma||y - d||),$$
 where $c, d \in C$.
Since $C$ is convex, $c{'} := \lambda c + (1 - \lambda) d \in C$, and by the convexity of $g$ we get
\begin{align*}
 {^{*}}g(\lambda x + (1 - \lambda) y) & \leq g(c{'}) + \sigma ||\lambda x + (1 - \lambda) y - c{'}|| \\
 & \leq \lambda g(c) + (1-\lambda)g(d) + \lambda \sigma ||x - c|| + (1 - \lambda) \sigma ||y - d||\\
 & = \lambda(g(c) + \sigma ||x-c||) + (1 - \lambda)(g(d) + \sigma||y - d||).
\end{align*}
Since the element of $C_{x} + D_{y}$ considered is arbitrary, we get that
${^{*}}g(\lambda x + (1 - \lambda) y) \leq \inf (C_{x} + D_{y}) = \inf C_{x} + \inf D_{y} =
\lambda {^{*}}g(x) + (1 - \lambda) {^{*}}g(y).$
\end{proof}


The next proposition says that if $g$ is linear, then ${^{*}}g$ is sublinear and $g^{*}$ is superlinear.

  \begin{proposition}\label{prp: sublinear}
If $A$ is a non-trivial subspace of $X$ such that 
 $(X, A)$ is an MW-pair, and if $g \in \Lip(A, \sigma)$ is linear, the following hold.\\[1mm]
 $(i)$ $g^{*}(x_{1} + x_{2}) \geq g^{*}(x_{1}) + g^{*}(x_{2})$, and
 ${^{*}}g(x_{1} + x_{2}) \leq {^{*}}g(x_{1}) + {^{*}}g(x_{2})$.\\[1mm]
$(ii)$ If $\lambda > 0$, then $g^{*}(\lambda x) = \lambda g^{*}(x)$ and
 ${^{*}}g(\lambda x) = \lambda {^{*}}g(x)$.\\[1mm]
 $(iii)$ If $\lambda < 0$, then $g^{*}(\lambda x) = \lambda {^{*}}g(x)$ and
 ${^{*}}g(\lambda x) = \lambda g^{*}(x)$.
 \end{proposition}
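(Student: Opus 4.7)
My plan is to prove (ii) and (iii) via the substitution $a = \lambda b$ in the defining sup/inf (which is a bijection of $A$ since $A$ is a subspace and $\lambda \neq 0$), and to prove (i) via the additive substitution $a, b \mapsto a + b$ together with the triangle inequality and the linearity of $g$. Each of the three identities amounts to transcribing a linearity, subspace-closure, or subadditivity fact into the McShane-Whitney formulas.

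For (ii), with $\lambda > 0$, substituting $a = \lambda b$ in the definition of $g^{*}(\lambda x)$ and using $g(\lambda b) = \lambda g(b)$ together with $||\lambda x - \lambda b|| = \lambda ||x - b||$ gives
\[g^{*}(\lambda x) = \sup \{\lambda (g(b) - \sigma ||x - b||) \mid b \in A\} = \lambda g^{*}(x),\]
since a positive scalar passes through $\sup$. The identical manipulation, with $\inf$ in place of $\sup$, yields ${^{*}}g(\lambda x) = \lambda {^{*}}g(x)$.

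For (iii), with $\lambda < 0$, the same substitution gives $||\lambda x - \lambda b|| = -\lambda ||x - b||$, so the bracketed expression becomes $\lambda (g(b) + \sigma ||x - b||)$. Pulling the negative $\lambda$ outside converts the $\sup$ into an $\inf$, yielding $g^{*}(\lambda x) = \lambda {^{*}}g(x)$; the companion identity ${^{*}}g(\lambda x) = \lambda g^{*}(x)$ is obtained symmetrically. The one constructive subtlety is the use of $\sup(\lambda C) = \lambda \inf C$ for $\lambda < 0$, but it is unproblematic since both sides exist by the MW-pair hypothesis, so the standard approximation argument applies.

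For (i), for arbitrary $a, b \in A$ one has $a + b \in A$ by subspace closure, $g(a) + g(b) = g(a + b)$ by linearity, and $||x_{1} - a|| + ||x_{2} - b|| \geq ||(x_{1} + x_{2}) - (a + b)||$ by the triangle inequality. Combining these three facts yields
\[[g(a) - \sigma ||x_{1} - a||] + [g(b) - \sigma ||x_{2} - b||] \leq g(a + b) - \sigma ||(x_{1} + x_{2}) - (a + b)|| \leq g^{*}(x_{1} + x_{2})\]
for every $a, b \in A$, and a standard $\epsilon/2$-approximation of the two suprema $g^{*}(x_{1})$ and $g^{*}(x_{2})$ then delivers $g^{*}(x_{1}) + g^{*}(x_{2}) \leq g^{*}(x_{1} + x_{2})$. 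The subadditivity of ${^{*}}g$ is dual, with $+\sigma$ replacing $-\sigma$ and the roles of $\sup$ and $\inf$ swapped. I do not anticipate any deep obstacle; the proposition is essentially a bookkeeping translation of the linearity of $g$, the closure of $A$ under the vector operations, and the norm inequalities into the McShane-Whitney formulas, with the constructive sup/inf manipulations all covered by the MW-pair hypothesis.
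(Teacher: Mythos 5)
Your proposal is correct and takes essentially the same route as the paper: the substitution $a=\lambda b$ (respectively $a = a_{1}+a_{2}$) inside the defining supremum/infimum, the linearity of $g$, the triangle inequality, and the standard constructive rules for scaling and adding suprema and infima (your $\epsilon/2$-approximation is exactly how the paper's appeal to $\inf(C_{1}+C_{2})=\inf C_{1}+\inf C_{2}$ is justified). No gap.
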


\begin{proof}
In each case we show only one of the two facts.\\
(i) If $a_{1}, a_{2} \in A$, then
\begin{align*}
 {^{*}}g(x_{1} + x_{2}) & = \inf\{g(a) + \sigma||(x_{1} + x_{2}) - a|| \mid a \in A\}\\
 & \leq g(a_{1} + a_{2}) + \sigma||x_{1} + x_{2} - (a_{1} + a_{2})||\\
 & \leq g(a_{1}) + \sigma ||x_{1} - a_{1}|| + g(a_{2}) + \sigma ||x_{2} - a_{2}||,
\end{align*}
therefore
\begin{align*}
 {^{*}}g(x_{1} + x_{2}) & \leq \inf\{g(a_{1}) + \sigma ||x_{1} - a_{1}|| + g(a_{2}) + \sigma ||x_{2} - a_{2}||
 \mid a_{1}, a_{2} \in A\}\\
 & = \inf\{g(a_{1}) + \sigma ||x_{1} - a_{1}|| \mid a_{1} \in A\} + \inf
 \{g(a_{2}) + \sigma ||x_{2} - a_{2}|| \mid a_{2} \in A\} \\
 & = {^{*}}g(x_{1}) + {^{*}}g(x_{2}).
 \end{align*}
(ii) First we show that ${^{*}}g(\lambda x) \leq \lambda {^{*}}g(x)$. If $a \in A$, then
\begin{align*}
 {^{*}}g(\lambda x) & = \inf\{g(a) + \sigma||\lambda x - a|| \mid a \in A\} \\
 & \leq g(\lambda a) + \sigma||\lambda x - \lambda a||\\
 & = \lambda g(a) + |\lambda|\sigma ||x - a|| \\
 & = \lambda(g(a) + \sigma||x - a||),
\end{align*}
therefore
\begin{align*}
  {^{*}}g(\lambda x) & \leq \inf\{\lambda(g(a) + \sigma||x - a||) \mid a \in A\}\\
  & = \lambda \inf\{g(a) + \sigma||x - a|| \mid a \in A\}\\
  & = \lambda {^{*}}g(x).
\end{align*}
For the inequality ${^{*}}g(\lambda x) \geq \lambda {^{*}}g(x)$ we work as follows.
\begin{align*}
 \lambda {^{*}}g(x) & = \lambda \inf\{g(a) + \sigma ||x - a|| \mid a \in A\}\\
 & \leq \lambda(g(\frac{1}{\lambda}a) + \sigma||x - \frac{1}{\lambda}a||)\\
& =  g(a) + \sigma|\lambda|||x - \frac{1}{\lambda}a||\\
 & = g(a) + \sigma||\lambda(x - \frac{1}{\lambda}a)||\\
 & = g(a) + \sigma||\lambda x - a||,
\end{align*}
therefore 
\begin{align*}
\lambda {^{*}}g(x) &  \leq \inf\{g(a) + \sigma||\lambda x - a|| \mid a \in A\} = {^{*}}g(\lambda x).
\end{align*}
(iii)  First we show that ${^{*}}g(\lambda x) \leq \lambda g^{*}(x)$. If $a \in A$, then
\begin{align*}
 {^{*}}g(\lambda x) & = \inf\{g(a) + \sigma||\lambda x - a|| \mid a \in A\} \\
 & \leq g(\lambda a) + \sigma||\lambda x - \lambda a||\\
 & = \lambda g(a) + |\lambda|\sigma ||x - a|| \\
 & = \lambda(g(a) - \sigma||x - a||),
\end{align*}
therefore
\begin{align*}
  {^{*}}g(\lambda x) & \leq \inf\{\lambda(g(a) - \sigma||x - a||) \mid a \in A\}\\
  & = \lambda \sup\{g(a) - \sigma||x - a|| \mid a \in A\}\\
  & = \lambda g^{*}(x).
\end{align*}
For the inequality ${^{*}}g(\lambda x) \geq \lambda g^{*}(x)$ we work as follows. Since 
\begin{align*}
 g^{*}(x) & = \sup\{g(a) - \sigma ||x - a|| \mid a \in A\}\\
 & \geq g(\frac{1}{\lambda}a) - \sigma||x - \frac{1}{\lambda}a||
\end{align*}
and $\lambda < 0$, we get
\begin{align*}
 \lambda g^{*}(x) & \leq \lambda(g(\frac{1}{\lambda}a) - \sigma||x - \frac{1}{\lambda}a||)\\
& =  g(a) - \lambda \sigma ||x - \frac{1}{\lambda}a||\\
& =  g(a) + \sigma |\lambda| ||x - \frac{1}{\lambda}a||\\
 & = g(a) + \sigma||\lambda(x - \frac{1}{\lambda}a)||\\
 & = g(a) + \sigma||\lambda x - a||,
\end{align*}
therefore
\begin{align*}
 \lambda g^{*}(x) &  \leq \inf\{g(a) + \sigma||\lambda x - a|| \mid a \in A\} = {^{*}}g(\lambda x).
                    \tag*{\qedhere}
\end{align*}
\end{proof}

As we have already said in subsection~\ref{s:metric}, if $x_{0} \in X$, it is not generally the case that 
$\Real x_{0} := \{\lambda x_{0} \mid \lambda \in \Real\}$
is a located subset of $X$. If $||x_{0}|| > 0$ though, $\Real x_{0}$ is a $1$-dimensional subspace of $X$ i.e.,
a closed and located linear subset of $X$ of dimension one (see~\cite{BB85}, p.~307). Of course, $\Real x_{0}$
is a convex subset of $X$.

A standard corollary of the classical Hahn-Banach theorem is that if $x_{0} \neq 0$, there is 
a bounded linear functional $u$ on $X$ such that $||u|| = 1$ and $u(x_{0}) = ||x_{0}||$. Its proof is 
based on the extension of the obvious linear map on $\Real x_{0}$ to $X$ 
through the Hahn-Banach theorem. Next follows a first approach to the translation of this corollary
in Lipschitz analysis. Of course, if $x_{0} \in X$, then 
$||.|| \in \LX$ and $L(||.||) = 1$. To find a Lipschitz function on $X$, which is also the McShane-Whitney extension of 
some Lipschitz function defined on the set of a non-trivial McShane-Whitney pair, we work as follows. 
It is immediate to see that if $x_{0} \in X$ such that $||x_{0}|| > 0$, then 
 $\Iii x_{0} := \{\lambda x_{0} \mid \lambda \in [-1, 1]\}$
is a compact subset of $X$. If we define $g: \Iii x_{0} \rightarrow \Real$ by
$g(\lambda x_{0}) = \lambda ||x_{0}||,$
for every $\lambda \in [-1, 1]$, then $g \in \Lip(\Iii x_{0})$ and $L(g) = 1$; if $\lambda, \mu 
\in [-1, 1]$, then 
$|g(\lambda x_{0}) - g(\mu x_{0})| = |\lambda ||x_{0}|| - \mu ||x_{0}|||
= |\lambda - \mu|||x_{0}|| = ||(\lambda - \mu)x_{0}|| = ||\lambda x_{0} - \mu x_{0}||,$
and since 
$$M_{0}(g) = \bigg \{\sigma_{\lambda x_{0}, \mu x_{0}}(g) = 
 \frac{|g(\lambda x_{0}) - g(\mu x_{0})|}{||\lambda x_{0} - \mu x_{0}||}
 = 1 \mid (\lambda, \mu) \in [-1, 1]_{0}\bigg \},$$
we get that $L(g) = \sup M_{0}(g) = 1$. Since $\Iii x_{0}$ is totally bounded,
by Proposition~\ref{prp: examples}(i) we have that $(X, \Iii x_0)$ is an MW-pair, hence 
the extension ${^{*}}g$ of $g$ is in $\LX$. By Proposition~\ref{prp: sameconstant} we 
have that $L({^{*}}g) = L(g) = 1$. 
%
%
%
%
%
%
%
%

 \begin{theorem}\label{thm: corhb2}
  Let $x_{0} \in X$ such that $||x_{0}|| > 0$. If $(X, \Real x_{0})$
  is a MW-pair, there exists a linear function $g \in \Lip(\Real x_0)$ with $L(g) = 1$, such that 
  ${^{*}}g$ is a sublinear
  Lipschitz function on $X$ with ${^{*}}g(x_{0}) = ||x_{0}||$ and $L({^{*}}g) = 1$, and $g^{*}$ is a superlinear
  Lipschitz function on $X$ with $g^{*}(x_{0}) = ||x_{0}||$ and $L(g^{*}) = 1$.
 \end{theorem}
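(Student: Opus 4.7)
The plan is to mimic the construction carried out in the paragraph preceding the theorem, replacing the compact segment $\Iii x_{0}$ by the full one-dimensional subspace $\Real x_{0}$. Define $g \colon \Real x_{0} \to \Real$ by $g(\lambda x_{0}) := \lambda \|x_{0}\|$ for every $\lambda \in \Real$; since $\|x_{0}\| > 0$, the representation $y = \lambda x_{0}$ determines $\lambda$ uniquely, so $g$ is well-defined and linear. The direct computation
\[
|g(\lambda x_{0}) - g(\mu x_{0})| = |\lambda - \mu|\,\|x_{0}\| = \|\lambda x_{0} - \mu x_{0}\|
\]
shows at once that $g \in \Lip(\Real x_{0}, 1)$ and, since $M_{0}(g) = \{1\}$, that $L(g) = \sup M_{0}(g) = 1$.

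Next I invoke the MW-pair hypothesis: by Theorem~\ref{thm: MW}(i)-(ii), the functions ${^{*}}g$ and $g^{*}$ belong to $\Lip(X, 1)$ and both extend $g$. In particular
\[
{^{*}}g(x_{0}) = g(x_{0}) = \|x_{0}\| = g(x_{0}) = g^{*}(x_{0}).
\]
Since $L(g)$ exists, Proposition~\ref{prp: sameconstant}(iii) gives $L({^{*}}g) = L(g) = L(g^{*}) = 1$.

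Finally, the assumption $\|x_{0}\| > 0$ guarantees that $\Real x_{0}$ is a non-trivial linear subspace of $X$, so Proposition~\ref{prp: sublinear} applies to the linear map $g$: its part (i) gives subadditivity of ${^{*}}g$ and superadditivity of $g^{*}$, while its part (ii) supplies positive homogeneity of both, jointly producing the sublinearity of ${^{*}}g$ and the superlinearity of $g^{*}$. I do not expect any genuine obstacle, since the theorem is essentially a packaging of Theorem~\ref{thm: MW}, Proposition~\ref{prp: sameconstant}(iii) and Proposition~\ref{prp: sublinear} around the canonical norming functional on $\Real x_{0}$; the only conceptual subtlety is that, constructively, the locatedness of $\Real x_{0}$ cannot be taken for granted, which is precisely why the hypothesis that $(X, \Real x_{0})$ be an MW-pair must be assumed rather than derived.
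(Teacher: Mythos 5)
Your proposal is correct and follows essentially the same route as the paper: define $g(\lambda x_{0}) := \lambda\|x_{0}\|$, check $g \in \Lip(\Real x_{0}, 1)$ with $L(g)=1$, and then combine Theorem~\ref{thm: MW}, Proposition~\ref{prp: sameconstant} and Proposition~\ref{prp: sublinear}. The only difference is cosmetic: the paper delegates the computation of $L(g)=1$ to the preceding paragraph on $\Iii x_{0}$, whereas you carry it out directly on $\Real x_{0}$, and you additionally make explicit the (immediate) evaluation ${^{*}}g(x_{0}) = g^{*}(x_{0}) = \|x_{0}\|$ from the extension property.
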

 
\begin{proof}
As in the proof of the previous remark, the function $g: \Real x_{0} \rightarrow \Real$, defined by
$g(\lambda x_{0}) = \lambda ||x_{0}||,$ for every $\lambda \in \Real$, is in $\Lip(\Real x_{0})$
and $L(g) = 1$. Since $(X, \Real x_{0})$ is an MW-pair, the 
extension ${^{*}}g$ of $g$ is a Lipschitz function, and by Proposition~\ref{prp: sameconstant} we get
$L({^{*}}g) = L(g) = 1$. Since $g$ is linear, by Proposition~\ref{prp: sublinear} we have that
${^{*}}g$ is sublinear. Similarly, the 
extension $g^{*}$ of $g$ is a Lipschitz function, and by Proposition~\ref{prp: sameconstant} we get
$L(g^{*}) = L(g) = 1$. Since $g$ is linear, by Proposition~\ref{prp: sublinear} we have that
$g^{*}$ is superlinear.
\end{proof}


\subsection{Approximate McShane-Whitney theorem for normed spaces}\label{s:aprrox}

Next we show the approximate version of the McShane-Whitney theorem for normed spaces. The technique 
of our proof\footnote{The author would like to thank Hajime Ishihara for bringing this technique to our attention.} 
is found in the proof of the lemma that is necessary to prove the approximate version of the 
constructive Hahn-Banach theorem (Lemma 5.9 in~\cite{BR87}, p.~39). What we get in the conclusion of the 
approximate McShane-Whitney theorem is not a $\sigma$-Lipschitz extension of a given $\sigma$-Lipschitz function $g$, 
but a $(1 + \epsilon)\sigma$-Lipschitz extension of $g$, for every $\epsilon > 0$, where the Lipschitz function $g$ 
is defined on a finite-dimensional subspace of a normed space. Of course, if we consider the trivial subspace $\{0\}$
of $X$, the McShane-Whitney theorem holds trivially.

\begin{theorem}[Approximate McShane-Whitney theorem for normed spaces]\label{thm: aprox}
Let $A$ be a non-trivial, finite dimensional, linear subspace of $X$ and $g \in \Lip(A, 
\sigma)$, for some $\sigma > 0$. If $\epsilon > 0$, there exist functions $g^{*}_{\epsilon}, {^{*}}g_{\epsilon}: X 
\to \Real$ such that:\\[1mm]
$(i)$ $g^{*}_{\epsilon}, {^{*}}g_{\epsilon} \in \Lip(X, (1 + \epsilon)\sigma)$.\\[1mm]
$(ii)$ $({g^{*}_{\epsilon}})_{|A} = ({^{*}}g_{\epsilon})_{|A} = g$.\\[1mm]
$(iii)$ $\forall_{f \in \Lip(X, (1 + \epsilon)\sigma)}(f_{|A} = g \Rightarrow 
g^{*}_{\epsilon} \leq f \leq {^{*}}g_{\epsilon})$.\\[1mm]
$(iv)$ The pair $(g^{*}_{\epsilon}$, ${^{*}}g_{\epsilon})$ is the unique pair of functions satisfying 
conditions $(i)${-}$(iii)$. 

\end{theorem}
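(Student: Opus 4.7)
My plan is to define, for each $x \in X$,
\[
g^*_\epsilon(x) := \sup\{g(a) - (1+\epsilon)\sigma\, d(x,a) \mid a \in A\}, \quad {^*}g_\epsilon(x) := \inf\{g(a) + (1+\epsilon)\sigma\, d(x,a) \mid a \in A\},
\]
mimicking the McShane-Whitney construction of Theorem~\ref{thm: MW} but with the enlarged Lipschitz constant $(1+\epsilon)\sigma$ in the weighted distance term. Once these functions are shown to be well-defined on $X$, the verification of (i)-(iv) is a direct transcription of the proof of Theorem~\ref{thm: MW} with $(1+\epsilon)\sigma$ in place of $\sigma$ throughout. Since $g \in \Lip(A, \sigma) \subseteq \Lip(A, (1+\epsilon)\sigma)$, the Lipschitz estimate for $g^*_\epsilon$ and the extremality and uniqueness arguments of Theorem~\ref{thm: MW}(iii)-(iv) go through in exactly the same form; the restriction identity $(g^*_\epsilon)_{|A} = g$ emerges because the strict excess
\[
g(a) - (1+\epsilon)\sigma\, d(a_0,a) \leq g(a_0) + \sigma\, d(a,a_0) - (1+\epsilon)\sigma\, d(a_0,a) = g(a_0) - \epsilon\sigma\, d(a_0,a) \leq g(a_0)
\]
forces the supremum to be attained precisely at $a = a_0$, and symmetrically for ${^*}g_\epsilon$.

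The substantive step is therefore the constructive existence of the supremum (and, dually, the infimum) over the possibly unbounded set $A$, and this is where the finite-dimensionality of $A$ and the $\epsilon$-slack enter in tandem. Fix $x \in X$, set $\phi_x(a) := g(a) - (1+\epsilon)\sigma\, d(x,a)$, and choose $a_0 := 0 \in A$. Using $|g(a) - g(a_0)| \leq \sigma\, d(a,a_0) \leq \sigma\, d(x,a) + \sigma\, d(x,a_0)$ one obtains the key estimate
\[
\phi_x(a) - \phi_x(a_0) \leq (2+\epsilon)\sigma\, d(x, a_0) - \epsilon\sigma\, d(x,a).
\]
With $R(x) := (2+\epsilon)\, d(x,a_0)/\epsilon$ and $B_x := \{a \in A \mid d(x,a) \leq R(x)\}$, the set $B_x$ is a bounded subset of the finite-dimensional normed space $A$ and is therefore totally bounded (a standard constructive fact about finite-dimensional subspaces of a normed space), while $\phi_x$ is uniformly continuous on $B_x$; hence $M := \sup \phi_x(B_x)$ exists and satisfies $M \geq \phi_x(a_0)$. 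To see that $M$ is also the supremum of $\phi_x$ over all of $A$, the approximation property is inherited from $B_x$, and for the upper bound one argues that any $a \in A$ with $\phi_x(a) > M \geq \phi_x(a_0)$ would, by the displayed estimate, satisfy $d(x, a) < R(x)$ and hence lie in $B_x$, forcing the contradiction $\phi_x(a) \leq M$. The dual argument yields existence of the infimum defining ${^*}g_\epsilon(x)$.

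The main obstacle is precisely this constructive localization of an unbounded supremum to a totally bounded ball: classically the sup exists by fiat, but constructively one must exhibit a concrete totally bounded piece of $A$ on which $\phi_x$ attains its sup, and one then has to dominate the rest of $A$ pointwise by the sup over that piece. It is exactly the $\epsilon$-slack (creating the negative $\epsilon\sigma\, d(x,a)$ term in the key estimate) combined with the finite-dimensionality of $A$ (so that bounded subsets are totally bounded) that makes the domination work, and this is the ingredient I expect to import from the proof of the approximate Hahn-Banach lemma of~\cite{BR87}. Once existence is secured, properties (i)-(iv) fall out by the same calculations as in Theorem~\ref{thm: MW}, now with Lipschitz constant $(1+\epsilon)\sigma$.
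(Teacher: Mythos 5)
Your proposal is correct and follows essentially the same route as the paper: both localize the supremum defining $g^{*}_{\epsilon}(x)$ to a bounded, hence totally bounded, subset of the finite-dimensional space $A$ by exploiting the $\epsilon$-slack, exactly as in the lemma for the approximate Hahn-Banach theorem in~\cite{BR87}. The only difference is organizational: by estimating $\phi_x(a)-\phi_x(0)$ directly from $|g(a)-g(0)|\leq\sigma d(a,0)$ you avoid the paper's preliminary normalization to $g(0)=0$ and the final shift back by $g(0)$.
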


\begin{proof}Suppose first that $g(0) = 0$.\\[1mm]
Let $a, b \in A$ and $x \in X$. Since $g(a) - g(b) \leq \sigma ||a -b|| \leq \sigma ||a - x|| + \sigma ||x - b||$,
we have that
$$g(a) - \sigma ||a - x|| \leq g(b) + \sigma ||x - b||,$$
hence
$$\phi_{\epsilon, x}(a) := g(a) - (1 + \epsilon)\sigma ||a - x|| \leq g(b) + (1 + \epsilon)\sigma ||x - b|| =:
\theta_{\epsilon, x}(b).$$
We find $r > 0$, such that for every $a \in A$
\begin{equation}\label{eq: eq1}
||a|| \geq r \Rightarrow \phi_{\epsilon, x}(a) \leq -(1 + \epsilon)\sigma ||x|| = \phi_{\epsilon, x}(0).
\end{equation}
Let
$$r := \frac{2(1 + \epsilon)}{\epsilon}||x||.$$
We show first that 
\begin{equation}\label{eq: eq2}
||a|| \geq r \Rightarrow ||a|| - (1 + \epsilon)||a - x|| \leq -(1 + \epsilon)||x||. 
\end{equation}
We have that 
\begin{align*}
||a - x|| \geq ||a|| - ||x|| & \To -||a - x|| \leq ||x|| - ||a|| \\
& \stackrel{\times (1 + \epsilon)} \To -(1 + \epsilon)||a - x|| \leq (1 + \epsilon)||x|| - (1 + \epsilon)||a|| \\
& \stackrel{+ ||a||} \To ||a||-(1 + \epsilon)||a - x|| \leq (1 + \epsilon)||x|| - (1 + \epsilon)||a|| + ||a|| \\
& \TOT ||a||-(1 + \epsilon)||a - x|| \leq (1 + \epsilon)||x|| - \epsilon ||a||.
\end{align*}
By the definition of $r$ we have that
\begin{align*}
||a|| \geq \frac{2(1 + \epsilon)}{\epsilon}||x|| & \To -||a|| \leq -\frac{2(1 + \epsilon)}{\epsilon}||x||\\
& \stackrel{\times \epsilon} \To -\epsilon ||a|| \leq -2(1 + \epsilon)||x||,
\end{align*}
hence 
$$||a||-(1 + \epsilon)||a - x|| \leq (1 + \epsilon)||x|| - \epsilon ||a|| \leq (1 + \epsilon)||x|| -2(1 + \epsilon)||x||
= -(1 + \epsilon)||x||.$$
Hence, if $||a|| \geq r$, multiplying the conclusion of~(\ref{eq: eq2}) by $\sigma > 0$ we get
$$\sigma||a|| - (1 + \epsilon)\sigma||a - x|| \leq -(1 + \epsilon)\sigma||x||.$$
Since 
$$g(a) = g(a) - g(0) \leq |g(a) - g(0)| \leq \sigma ||a - 0|| = \sigma ||a||,$$
we get 
$$g(a) - (1 + \epsilon)\sigma||a - x|| \leq -(1 + \epsilon)\sigma||x||,$$
which is the required conclusion of~(\ref{eq: eq1}).
Since $A$ is finite dimensional space, the set $K_r := \{a \in A \mid ||a|| \leq r\}$ is compact
(see~\cite{BR87}, p.~34), and
$$s_{\epsilon, x} := \sup \{\phi_{\epsilon, x}(a) \mid a \in K_r\}$$
is well defined, since $\phi$ is uniformly continuous and $K_r$ is totally bounded. Since $0 \in K_r$ 
and since the set $\{\phi_{\epsilon, x}(a) \mid a \in A \ \& \ ||a|| \geq r\}$ is bounded above by $\phi_{\epsilon, x}(0)$, 
we get\footnote{Here 
we use the simple fact that if $D \subseteq B \subseteq \Real$ such that $D$ is dense in $B$ and $\sup D$ 
exists, then $\sup B$ exists and
$\sup B = \sup D$. The set $E = K_r \cup \{a \in A \mid ||a|| \geq r\}$ is dense in $A$ and $\phi_{\epsilon, x}(E)$ 
is dense in $\phi_{\epsilon, x}(A)$.} 
$$s_{\epsilon, x} = \sup \{\phi_{\epsilon, x}(a) \mid a \in A\}.$$ 
Hence we define
$$g^{*}_{\epsilon}(x) := s_{\epsilon, x} = \sup \{g(a) - (1 + \epsilon)\sigma||x - a|| \mid a \in A\}.$$
Next we show that $g^{*}_{\epsilon} \in \Lip(X, (1+ \epsilon)\sigma)$. Since $||x-a|| \leq ||x-y|| + ||y-a||$, we get
$g(a) - (1 + \epsilon)\sigma||x - a|| \geq g(a) - (1 + \epsilon)\sigma||y - a|| - (1 + \epsilon)\sigma||x - y||$, and 
$(1 + \epsilon)\sigma||x - y|| + g(a) - (1 + \epsilon)\sigma||x - a|| \geq g(a) - (1 + \epsilon)\sigma||y - a||$, hence
$(1 + \epsilon)\sigma||x - y|| + g(a) - (1 + \epsilon)\sigma||x - a|| \geq g^{*}_{\epsilon}(y)$, and
$(1 + \epsilon)\sigma||x - y|| + g^{*}_{\epsilon}(x) \geq g^{*}_{\epsilon}(y)$. Consequently, 
$$g^{*}_{\epsilon}(y) - g^{*}_{\epsilon}(x) \leq (1 + \epsilon)\sigma||x - y||.$$
Working similarly, we also get $g^{*}_{\epsilon}(x) - g^{*}_{\epsilon}(y) \leq (1 + \epsilon)\sigma||x - y||.$
Suppose next that $a_0 \in A$. By definition we have that $g^{*}_{\epsilon}(a_0) = \sup \{g(a) - (1 + \epsilon)||a_0 - a|| 
\mid a \in A\}.$ Clearly, $g(a_0) = g(a_0) - (1 + \epsilon)\sigma||a_0 - a_0|| \leq g^{*}_{\epsilon}(a_0)$.
If $a \in A$, then $g(a) - g(a_0) \leq \sigma ||a - a_0|| \leq (1 + \epsilon)\sigma||a - a_0||$, hence
$- g(a_0) \leq - g(a) + (1 + \epsilon)\sigma||a - a_0||$ and $g(a_0) \geq g(a) - (1 + \epsilon)\sigma||a - a_0||$. Since
$a \in A$ is arbitrary, we get $g(a_0) \geq g^{*}_{\epsilon}(a_0)$. If $f \in \Lip(X, (1 + \epsilon)\sigma)$ such that
$f_{|A} = g$, then $g(a) -f(x) = f(a) - f(x) \leq (1 + \epsilon)\sigma||x - a||$, hence 
$g(a) - (1 + \epsilon)\sigma||x - a|| \leq f(x)$, and since $a \in A$ is arbitrary, we get $g^{*}_{\epsilon}(x) \leq f(x)$.\\[1mm]
For the construction of ${^{*}}g_{\epsilon}$ we work similarly, as follows. The same $r > 0$ that we chose earlier satisfies,
for every $a \in A$, also the property
$$||a|| \geq r \Rightarrow \theta_{\epsilon, x}(a) = g(a) + (1 + \epsilon)\sigma||x - a|| \geq (1+ \epsilon)\sigma||x||
= \theta_{\epsilon, x}(0).$$
The reason for this is that since we can choose $||a|| \geq r$ such that $||a|| \leq (1 + \epsilon)(||a - x|| - ||x||)$,
we have that 
\begin{align*}
& -||a|| \geq (1 + \epsilon)(||x|| -||a - x||) \Rightarrow \\
& -||a|| + (1 + \epsilon)(||a - x||) \geq (1 + \epsilon)||x|| \Rightarrow \\
& - \sigma||a|| + (1 + \epsilon)\sigma(||a - x||) \geq (1 + \epsilon)\sigma||x|| \Rightarrow \\
& g(a)  + (1 + \epsilon)\sigma(||a - x||) \geq (1 + \epsilon)\sigma||x||,
\end{align*}
since from the inequality $-g(a) \leq |g(a)| \leq \sigma ||a||$, we get $g(a) \geq - \sigma ||a||$. 
Similarly,
$$\iota_{\epsilon, x} := \inf \{\theta_{\epsilon, x}(a) \mid a \in K_r\}$$
is well-defined. Since the set 
$\{\theta_{\epsilon, x}(a) \mid a \in A \ \& \ ||a|| \geq r\}$ is bounded below by $\theta_{\epsilon, x}(0)$, we get
$$\iota_{\epsilon, x} = \inf \{\theta_{\epsilon, x}(a) \mid a \in A\}.$$
Hence, we define
$${^{*}}g_{\epsilon}(x) := \iota_{\epsilon, x} = \inf \{g(a) + (1 + \epsilon)\sigma ||x - a|| \mid a \in A\}.$$
To show the properties of ${^{*}}g_{\epsilon}$ in (i)-(iii), we proceed as we did in the proof of the properties 
of $g^{*}_{\epsilon}$.  
The uniqueness of the pair $(g^{*}_{\epsilon}, {^{*}}g_{\epsilon})$ follows immediately.\\[1mm]
In the general case, where we do not assume that $g(0) = 0$, we work as follows. If $h : A \to \Real$ is defined by
$$h(a) := g(a) - g(0),$$
for every $a \in A$, then $h(0) = 0$. Clearly, $h \in \Lip(A, \sigma)$, hence by our proof above there 
are functions $h^{*}_{\epsilon}, {^{*}}h_{\epsilon}: X \to \Real$ such that:\\[1mm]
(i)$^{'}$ $h^{*}_{\epsilon}, {^{*}}h_{\epsilon} \in \Lip(X, (1 + \epsilon)\sigma)$.\\[1mm]
(ii)$^{'}$ $({h^{*}_{\epsilon}})_{|A} = ({^{*}}h_{\epsilon})_{|A} = h$.\\[1mm]
(iii)$^{'}$ $\forall_{F \in \Lip(X, (1 + \epsilon)\sigma)}(F_{|A} = h \Rightarrow 
h^{*}_{\epsilon} \leq F \leq {^{*}}h_{\epsilon})$.\\[1mm]
(iv)$^{'}$ The pair $(h^{*}_{\epsilon}$, ${^{*}}h_{\epsilon})$ is the unique pair of functions satisfying 
conditions (i)$^{'}{-}$(iii)$^{'}$. \\[1mm]
We define $g^{*}_{\epsilon}, {^{*}}g_{\epsilon}: X \to \Real$ by
$$g^{*}_{\epsilon}(x) := h^{*}_{\epsilon}(x) + g(0),$$
$${^{*}}g_{\epsilon}(x) := {^{*}}h_{\epsilon}(x) + g(0),$$
for every $x \in X$. Properties (i)-(iv) for $g^{*}_{\epsilon}$ and ${^{*}}g_{\epsilon}$ follow immediately
from properties (i)$^{'}{-}$(iv)$^{'}$ for $h^{*}_{\epsilon}$ and ${^{*}}h_{\epsilon}$.
\end{proof}

\section{Concluding remarks and open questions}\label{S:open}

In this paper we examined the McShane-Whitney extension of Lipschitz functions from the Bishop-style,
constructive point of view. As in the case of Tietze's theorem, or the Hahn-Banach theorem,
the generality of the classical McShane-Whitney theorem is lost. In reward though, 
the computational content of this extension is constructively studied through the notion of a
McShane-Whitney pair. 

Moreover, the analogies between the Hahn-Banach theorem and the McShane-Whitney theorem made possible to 
transfer methods and results from one to the other. A significant example of the interplay between the two
theorems is the approximate McShane-Whitney theorem, the proof of which uses the proof-technique of the 
approximate Hahn-Banach theorem.

Another example that we plan to elaborate in future work is the following.
If $p$ is a \textit{seminorm} on a vector space $X$ i.e.,  
$p : X \to \Real$ such that (i) $p(x) \geq 0$, (ii) $p(x + y) \leq p(x) + p(y),$ and (iii) $p(\lambda x) = 
|\lambda| p(x)$, for
every $x, y \in X$ and $\lambda \in \Real$, the set of $p$-Lipschitz functions on $X$ is defined by
$$p{-}\Lip(X) := \bigcup _{\sigma \geq 0} p{-}\Lip(X, \sigma),$$
$$p{-}\Lip(X, \sigma) := \{f \in \mathbb{F}(X, Y) \mid \forall_{x, y \in X}\big(|f(x) -  f(y)| \leq 
\sigma p(x - y)\big)\}.$$ 
One can define a $p$-MW-pair in the obvious way. In order to get examples of $p${-}MW-pairs, one can define 
$p$-versions of locatedness, total boundedness and uniform continuity, and prove, similarly, the $p$-versions
of the propositions in Bishop and Bridges pp.~94-95 and the $p$-version of Proposition~\ref{prp: examples}.
The proof of the expected McShane-Whitney theorem for $p$-Lipschitz functions is similar to the proof of 
Theorem~\ref{thm: MW}.
The following MW-version of the analytic Hahn-Banach theorem for seminorms\footnote{According to it, if $p$ 
is a seminorm on $X$, $A$ a subspace of $X$ and $g : A \to \Real$ a linear function such that 
$\forall_{a \in A}\big(g(a) \leq p(a)\big)$, there is linear function $f : X \to \Real$ that extends $g$ and 
$\forall_{x \in X}\big(f(x) \leq p(x)\big)$. In its general formulation, the analytic Hahn-Banach theorem refers
to sublinear functionals $p$, where the condition (iii) of a seminorm is replaced by the condition 
$p(\lambda x) = \lambda p(x)$, for $\lambda \geq 0$ (see~\cite{Li16}, p.~120). Its proof uses Zorn's lemma.}
can be shown.\\[2mm]
\textit{If $p$ is a seminorm on
$X$, $(X, A)$ is a $p$-MW-pair, $A$ is a subspace of $X$, and $g : A \to \Real$ is linear, then if
$\forall_{a \in A}(g(a) \leq p(a))$, there is a superlinear extension $g^{*}$ of $g$ such that $g^{*} 
\in \pLip(X, 1)$ and $\forall_{x \in X}(g^{*}(x) \leq p(x))$, while if 
$\forall_{a \in A}(g(a) \geq p(a))$, there is a sublinear extension ${^{*}}g$ of $g$ such that 
 ${^{*}}g \in \pLip(X, 1)$ and $\forall_{x \in X}({^{*}}g(x) \geq p(x))$}.\\[2mm]
With respect to the analytic Hahn-Banach theorem for seminorms we lose linearity of the extension function, since 
in the case (i) of the previous result $g^{*}$ is only superlinear, but instead we have that $g^{*} \in \pLip(X, 1)$, and we avoid
Zorn's lemma. We expect to find good applications of the previous result in convex analysis, where linearity
is not as important as sublinearity, or superlinearity.\\

Moreover, the following problems and questions arise naturally from our work.

\begin{enumerate}


\item To find necessary and sufficient conditions on metric spaces $X, Y$ and on function $f \in \Lip(X, Y)$ 
such that $L(f)$ and/or $L^*(f)$ exist. This problem is similar to the normability of a bounded linear functional $T$, 
which, constructively is equivalent to the locatedness of the kernel of $T$. 
\item To characterise, for given metric space $X$, its MW-pairs $(X, A)$.
\item To find necessary and/or sufficient conditions on a normed space $(X, ||.||)$ 
such that $(X, \Real x_{0})$ is an MW-pair for some given $x_0 \in X$ with $||x_{0}|| > 0$. We expect that 
the properties added by Ishihara in~\cite{Is89} to the hypotheses of the Hahn-Banach theorem i.e., the uniform
convexity and separability of $X$ and the G\^{a}teaux differentiability of its norm, are interesting properties 
to be tested for this question too. 
\item To connect the separation version of the Hahn-Banach theorem to the McShane-Whitney theorem.
\end{enumerate}


\vspace{3mm}

We hope to address these questions and problems in future work.

\vspace{8mm}

\noindent
The first version of this paper was completed during our research visit to the Japan Advanced Institute of 
Science and Technology (JAIST) that was funded by the European-Union project ``Computing with Infinite Data''.
We would like to thank Hajime Ishihara for hosting us in JAIST and discussing with us the material of this work. 
The second version of this paper was completed during our research visit to the Philosophy Department of
Carnegie Mellon University that was funded by the same European-Union project. We would like to thank Wilfried Sieg
for hosting us in CMU.
\vspace{2mm}

\noindent
We would also like to thank the anonymous referees for their very helpful comments and suggestions.

\end{document}